\documentclass[final,1p,times]{elsarticle}
\usepackage{amssymb}
\usepackage{amsmath}
\usepackage{enumitem}
\usepackage{algorithm}
\usepackage{appendix}
\usepackage{algorithmic}
\usepackage{graphicx}
\usepackage{tabularx}
\usepackage{booktabs}   
\usepackage{gensymb}
\usepackage{subcaption}
\usepackage{hyperref}
\usepackage{epsfig}
\usepackage{subcaption}
\usepackage{amsthm}
\usepackage{lmodern}
\usepackage{xcolor}

\newtheorem{theorem}{Theorem}[section]
\newtheorem{corollary}[theorem]{Corollary}

\newtheorem{proposition}[theorem]{Proposition}
\newtheorem{definition}[theorem]{Definition}
\newtheorem{remark}[theorem]{Remark}

\DeclareMathOperator{\divop}{div}

\hypersetup{
	colorlinks=true
}
\usepackage{natbib} 
\makeatletter
\def\ps@pprintTitle{%
	\let\@oddhead\@empty
	\let\@evenhead\@empty
	\def\@oddfoot{}%
	\let\@evenfoot\@oddfoot}
\makeatother

\begin{document}

\begin{frontmatter}

\title{Gauge-energy preservation under congestion-controlled network repair} 
\author{{Zhenyuan Sun}}
\ead{2300010715@stu.pku.edu.cn}
\author{Dayue Chen}

\address{School of Mathematical Sciences, Peking University, Beijing 100871, China.}

\begin{abstract}
We study preservation of finite gauge-energy flows under local network repair after Bernoulli
edge failures. A macroscopic demand network records terminal pairs to be routed, while a
microscopic physical network contains local backup routes, bypasses and shared corridors. We
prove a deterministic gauge-energy repair theorem: if the usable demand network $\mathcal{B}^{\sharp}$ carries a finite $\Phi$-energy flow $\theta$, then the repaired physical network $H$ carries a lifted finite
$\Phi$-energy flow $\Theta$ with $$
\mathcal{E}^\Phi_H(\Theta) \le L\,\beta_\Phi(K)\,\mathcal{E}^\Phi_{\mathcal{B}^{\sharp}}(\theta),
$$ where $L$ bounds route length, $K$ bounds routing congestion, $\mathcal{E}$ marks energy, and $\beta_\Phi$ is the gauge dilation constant. We then convert this comparison into probabilistic repair criteria:
finite-dependent local repair is handled via domination by product measures, and random repair
lengths via a variable-cost formulation compatible with chemical-distance estimates. As a main
application, we prove a finite-dependent local bypass theorem: any macroscopic network whose
supercritical percolation cluster supports a finite gauge-energy flow remains gauge-energy stable
after bounded-range local reinforcement, provided the local repair probability is sufficiently
high. This yields reinforced lattice and wedge-type examples and provides a potential-theoretic
framework for random network repair beyond tree-like or edge-disjoint constructions.

\end{abstract}

\begin{keyword}
	Bernoulli percolation; network repair; gauge energy; finite-energy flow;  effective resistance

\end{keyword}

\end{frontmatter}

\section{Introduction}
\textbf{Background and motivation. }
Random perturbations of infinite graphs provide a basic setting for studying how local changes in
network structure affect global analytic and flow properties~\cite{Grimmett,Li2021}. A natural
question is which properties of an infinite graph remain stable when its local connectivity is
randomly altered. Bernoulli bond percolation provides a standard model for such perturbations:
each edge of a graph is independently retained with probability $p$ and deleted otherwise
~\cite{Grimmett,LP}. The resulting random subgraph leads to questions concerning connectivity,
random walks, effective resistance, and finite-energy flows~\cite{DS,LP}. In particular,
understanding how local edge failures influence global flow properties is an important problem
in the potential theory of random networks and repairing such failures is also an interesting topic.

Among the analytic properties encoded by finite-energy flows, transience is of particular
interest. By the electrical-network criterion, transience is equivalent to the existence of a
unit flow to infinity with finite quadratic energy given by the functional
\begin{equation*}
	\mathcal{E}(\theta)=\sum_e R_e\,\theta(e)^2<\infty,
\end{equation*}
where $R_e$ denotes the resistance of edge $e$ and $\theta(e)$ denotes the flow through edge
$e$~\cite{DS,LP}. Rayleigh monotonicity implies that recurrence is preserved under edge
deletion, whereas the preservation of transience is more delicate because removing edges may
destroy the routes supporting finite-energy flows to infinity. Thus, network perturbations lead
naturally to the question of whether finite-energy flows can be preserved after local changes of
the network. 

Beyond the quadratic setting, this viewpoint extends to more general convex gauges
$\Phi$ and finite $\Phi$-energy flows satisfying~\cite{HM,LP2}
\begin{equation*}
	\mathcal{E}^{\Phi}(\theta)=\sum_e \Phi(|\theta(e)|)<\infty.
\end{equation*}
This broader formulation provides a natural framework for studying the stability of energy-controlled flows under
general network modifications, including random failures and local repair mechanisms.

The preservation of finite-energy flows under random perturbations has been studied extensively
in electrical networks and percolation theory. For a graph $G=(V,E)$, Bernoulli edge percolation
produces a random subgraph $G_p=(V,E_p)$ with $E_p\subseteq E$, and a central question is
whether analytic properties of $G$ are preserved on $G_p$. 
The electrical-network framework of
Doyle and Snell~\cite{DS} and the subsequent development of Lyons and Peres~\cite{LP}
established the role of unit flows and energy minimization in characterizing such properties.  
Studies on percolation perturbations have further investigated
how potential-theoretic and random-walk properties behave on percolation clusters
~\cite{BLS}.
Classical percolation results, including those of Grimmett, Kesten and Zhang~\cite{GKZ},
established transience properties of infinite percolation clusters. Further analytic properties
of random walks on supercritical clusters, such as heat kernel estimates, were developed by
Barlow~\cite{Barlow2004}. Beyond the quadratic setting, finite-energy flows defined through
general convex gauges have also been studied. Hoffman and Mossel~\cite{HM} investigated the
stability of finite gauge-energy flows under thinning of $\mathbb{Z}^d$, while Levin and Peres
developed criteria based on convex energy gauges and cutsets~\cite{LP2}.\\

The above results mainly describe random perturbations in which the modified network is obtained by retaining or deleting edges of the original graph. In such settings, the flow is studied directly on the surviving subgraph. However, many network models with local redundancy preserve connectivity after failures through bypasses, redundant connections, or other repair mechanisms. Related ideas have been explored in complex and infrastructure networks, where bypass rewiring and post-failure repair strategies are employed to maintain or restore connectivity after disruptions~\cite{ParkHahn2016,Chujyo,Canbilen}. Flow-based analyses have further investigated how structural degradation and failure processes affect network-level flow behavior~\cite{Hamedmoghadam}.

Unlike simple edge deletion, repair mechanisms generate new random edge configurations with nontrivial dependence structures. Finite-dependent and positively correlated percolation models have received renewed attention in this context. Liggett, Schonmann and Stacey~\cite{LSS} established the canonical domination-by-product-measures criterion, Temmel~\cite{Temmel2014} extended it to non-homogeneous marginals, and Köhler-Schindler and Sulser ~\cite{KS24} further developed domination results under positive association assumptions. 
In repaired networks, a failed edge does not necessarily eliminate the corresponding demand; instead, the demand may be realized through newly constructed routes. Therefore, preserving a finite-energy flow in such networks requires quantitative control of the flow transfer through repair routes and the additional energy introduced by the repair process. These observations motivate the study of gauge-energy preservation under congestion-controlled local network repair.\\

\textbf{Main results.} We introduce a two-level network repair setting to study the preservation
of finite $\Phi$-energy flows after random failures. A macroscopic demand network
$\mathcal{B}=(T,E_{\mathcal{B}})$ with a distinguished root $o$ records terminal connections that need to be maintained, while a
microscopic physical network $H=(V_H,E_H)$ provides local repair routes for realizing these
connections through an injective terminal map $\iota:T\to V_H$. Instead of requiring a demand edge
to survive as a single physical edge, we allow it to remain usable whenever its terminal pair can be
connected through an available local repair route. Note that \(\iota\) is not required to be the identity: macroscopic demand vertices may be embedded at distinct physical locations, and a demand edge may be realized by a local bypass rather than a single physical edge. The special case \(\iota = \mathrm{id}\) recovers ordinary Bernoulli percolation on \(H\), in which a demand edge survives precisely when its corresponding physical edge is open; the general case \(\iota \neq \mathrm{id}\) is the setting of interest, where redundancy is provided by local repair routes rather than by direct edge survival.

The first main result is a deterministic flow-lifting theorem (Theorem~\ref{thm:main}). Suppose
that the usable demand network $\mathcal{B}^{\sharp}=(T,F)$ is repaired in $H$ by an $(L,K)$-controlled scheme,
where every demand edge is implemented by a physical route of length at most $L$, and at most $K$
routes share any physical edge. If $\mathcal{B}^{\sharp}$ supports a unit flow $\theta$ with finite $\Phi$-energy,
then the component of $\iota(o)$ in $H$ supports a lifted unit flow $\Theta$ satisfying
\begin{equation}\label{eq:intro-main-xu}
	\mathcal{E}^\Phi_H(\Theta)\le L\,\beta_\Phi(K)\,\mathcal{E}^\Phi_{\mathcal{B}^{\sharp}}(\theta).
\end{equation}
Here, $L$ measures the cost of route length and $K$ controls the congestion caused by route overlap.

The second main result gives a probabilistic repair sufficient condition (Theorem~\ref{thm:fd}). For
bounded-range local repair models under Bernoulli failures, we show that suitable domination
conditions on the repairability field imply the preservation of finite $\Phi$-energy flows with
positive probability. A random-cost extension (Theorem~\ref{thm:random}) further allows repair
routes with random lengths.

Finally, we apply the setting to lattice-based local repair models and prove a finite-dependent
local bypass theorem (Proposition~\ref{prop:sq}). These examples show how local redundancy, route
length, and congestion determine the cost of preserving macroscopic gauge-energy flows after random
failures.
The proposed approach provides sufficient conditions for preserving finite gauge-energy flows after
local repair. It is not intended as a characterization of all random networks or all possible
repair mechanisms, but rather applies when a suitable demand structure and controlled local
repair routes are available.

\textbf{Proof strategy.} The proof combines a deterministic flow lifting argument with a
probabilistic repair sufficient condition. The deterministic part constructs a lifted flow along selected
repair routes and controls the additional $\Phi$-energy through route length and congestion. The
probabilistic part uses domination arguments to guarantee the availability of suitable repair routes
under random failures.

The paper is organized as follows. Section~\ref{sec:prelim} introduces demand networks, physical
networks, flows and gauge energies. Section~\ref{sec:det} proves the deterministic repair theorems,
including the weighted variable-cost version. Section~\ref{sec:prob} records probabilistic repair
criteria and the external percolation inputs used later. Section~\ref{sec:app} presents the
applications, including independent corridor reinforcement, finite-dependent square-bypass networks,
lattice and wedge-type consequences, coarse block repairs and finite-scale resistance comparisons. 

\section{Networks, flows and gauge energies}\label{sec:prelim}

We first introduce the network structures, flow notions, and energy functionals used throughout the paper.
The model distinguishes between a macroscopic demand network, which describes the connections to be maintained, and a microscopic physical network, which provides possible repair routes after failures. 
Then, we define flows on these networks and the gauge energies used to measure their costs.
In this paper, graphs are assumed to be locally finite and undirected, with flows represented on oriented edges. And when considering flows to infinity, we work on infinite graphs. Parallel edges are allowed in the macroscopic demand network.

\begin{definition}[Demand and physical networks]\label{def:networks}
	A \emph{demand network} is a connected locally finite multigraph
	\begin{equation*}
		\mathcal{B} = (T, E_{\mathcal{B}})
	\end{equation*}	where \(T\) is the set of terminals and \(E_{\mathcal B}\) is the set of demand edges, with a distinguished root $o \in T$. A \emph{physical network} is a locally finite graph
	\begin{equation*}
		H = (V_H, E_H)
	\end{equation*}
	where \(V_H\) and \(E_H\) are the vertex and edge sets, together with an injective terminal map $\iota: T \to V_H$. A demand edge $e = \{u, v\}
	\in E_{\mathcal{B}}$ represents a macroscopic connection between the physical terminals
	$\iota(u)$ and $\iota(v)$. 
\end{definition}

We recall the standard notion of flows on infinite networks; see  
\cite{DS, LP}.
\begin{definition}[Unit flow]\label{def:flow}
	A \emph{unit flow from $o$ to infinity} on a locally finite graph is an antisymmetric
	function $\theta$ on oriented edges satisfying
	\begin{equation*}
		\divop \theta(o) = 1, \qquad \divop \theta(x) = 0 \quad (x \ne o),
	\end{equation*}
	where
	\begin{equation*}
		\divop \theta(x) = \sum_{y:\, y \sim x} \theta(x, y)
	\end{equation*}
	denotes the net flow leaving $x$. Here $\theta(x, y)$ denotes the flow along the oriented edge $(x,y)$. For a finite terminal set $A$, $o \notin A$, a \emph{unit flow from the root $o$ to $A$} is defined similarly, with
	$\divop \theta(o) = 1$, with $\divop \theta(x) = 0$ for $x \notin \{o\} \cup A$, and with total
	divergence $\sum_{x \in A} \divop \theta(x) = -1$ on $A$.
\end{definition}

For unit resistances, the quadratic energy is
\begin{equation*}
	\mathcal{E}(\theta) = \sum_{e\in E} \theta(e)^2 .
\end{equation*}

By Thomson’s principle and the electrical-network criterion, an infinite locally finite graph is transient if and only if it supports a unit flow to infinity with finite quadratic energy~\cite{DS,ABBP,LP}.

To measure flow costs beyond the classical quadratic energy, we use the gauge-energy framework developed for flows on percolation clusters~\cite{HM,LP2}. The following definition introduces the corresponding energy functional.
\begin{definition}[Gauge energy]\label{def:gauge}
	A \emph{gauge} is an increasing convex function $\Phi: [0, \infty) \to [0, \infty)$ with
	$\Phi(0) = 0$. The $\Phi$-energy of a flow $\theta$ on a graph $X$ is defined as
	\begin{equation*}
		\mathcal{E}^\Phi_X(\theta) = \sum_{e \in E_X} \Phi(|\theta(e)|).
	\end{equation*}
	For $\Phi(t) = t^q$, $q > 1$, we write
	\begin{equation*}
		\mathcal{E}^q_X(\theta) = \sum_{e \in E_X} |\theta(e)|^q .
	\end{equation*}
\end{definition}

To quantify the energy increase caused by combining multiple flows on a shared edge, for an integer \(m\geq1\) representing the number of flows, we define the dilation constants

\begin{equation}\label{eq:dilation}
	\alpha_\Phi(m) = \sup_{t > 0} \frac{\Phi(mt)}{\Phi(t)}, \qquad
	\beta_\Phi(m) = \frac{\alpha_\Phi(m)}{m}, \qquad
	\beta_\Phi(K) = \max_{1 \le m \le K} \beta_\Phi(m) .
\end{equation}
These quantities describe the growth of the gauge when \(m\) flow contributions are merged. The arguments below only require these quantities to be finite for the congestion levels under consideration. For the power gauge \(\Phi(t)=t^q\),

\begin{equation}\label{eq:power}
	\alpha_\Phi(m) = m^q, \qquad \beta_\Phi(m) = m^{q-1}, \qquad \beta_\Phi(K) = K^{q-1}.
\end{equation}

The following convexity estimate will be used throughout the paper.   For \(m\geq1\) and \(x_1,\ldots,x_m\geq0\), let $\bar{x} = m^{-1}\sum_{i=1}^m x_i$. Since $\Phi$ is convex,
$\Phi(\bar{x}) \le m^{-1}\sum_{i=1}^m \Phi(x_i)$,   Jensen's inequality, and  the definition
of $\alpha_\Phi$ give
\begin{equation}\label{eq:jensen}
	\Phi\bigl(m\bar{x}\bigr) \le \alpha_\Phi(m) \Phi(\bar{x})
	\le \beta_\Phi(m) \sum_{i=1}^m \Phi(x_i).
\end{equation}
This is the exact convex-energy cost of merging $m$ flows on a shared edge: the factor
$\beta_\Phi(m)$ is the average dilation of $\Phi$ over $m$ arguments.

\section{Deterministic repair theorems}\label{sec:det}

We now introduce the deterministic repair model used to transfer flows from a demand network to a physical network. The main idea is to replace failed demand edges by selected repair paths in the physical network. The energy cost of this transfer is controlled by two quantities including the maximum route length and the congestion caused by shared physical edges.

The following definition formalizes a repair scheme.

\begin{definition}[Repair scheme]\label{def:scheme}
	Let $F \subseteq E_{\mathcal{B}}$. A \emph{repair scheme} for $F$ assigns to each demand edge
	$e = \{u, v\} \in F$ a finite path
	\begin{equation*}
		\gamma_e \subseteq H
	\end{equation*}
	joining $\iota(u)$ to $\iota(v)$. The usable demand network is $\mathcal{B}^{\sharp} = (T, F)$.
\end{definition}

For a physical edge $a \in E_H$, let
\begin{equation*}
	I(a) = \{e \in F : a \in \gamma_e\}, \qquad m_a = |I(a)|.
\end{equation*}
The number $m_a$ is the \emph{congestion} of the selected repair scheme at $a$.

The repair cost depends on two quantities, the length of repair paths and the congestion on physical edges. Therefore, we introduce the following notion $(L,K)$ -controlled repair.
\begin{definition}[$(L,K)$-controlled repair]\label{def:controlled}
	A repair scheme is \emph{$(L,K)$-controlled} if
	\begin{equation*}
		\sup_{e \in F} |\gamma_e| \le L, \qquad \sup_{a \in E_H} m_a \le K.
	\end{equation*}
\end{definition}

The $(L,K)$-controlled condition bounds two key features of a repair scheme, namely the length of repair paths and the congestion of shared physical edges. The following theorem shows that, under this condition, a finite 
$\Phi$-energy flow on the demand network can be transferred to the physical network with a controlled energy cost. Meanwhile, figure \ref{fig:fig01} shows the schematic diagram of the proof process for Theorem \ref{thm:main}. 

\begin{figure}[htbp]
	\centering
	\includegraphics[width=0.6\textwidth]{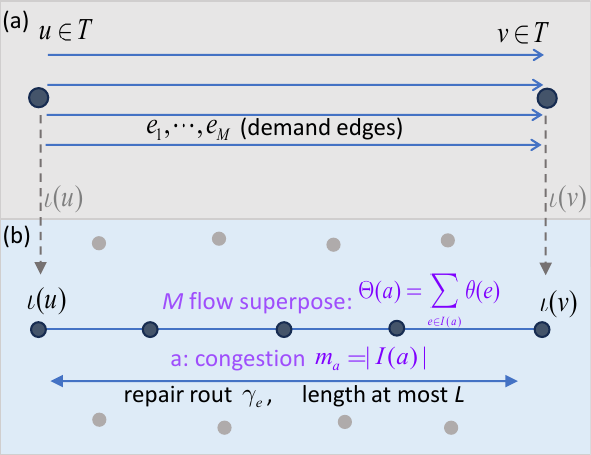}

\caption{Two-layer repair model: (a) Demand network
	$\mathcal B=(T,E_{\mathcal B})$ with terminals $u,v\in T$ and $M$ parallel
	demand edges $e_1,\dots,e_M$; (b) Physical network $H=(V_H,E_H)$ with an
	injective terminal map $\iota:T\to V_H$. A demand edge $e=\{u,v\}$ is
	replaced by a repair route $\gamma_e$ from $\iota(u)$ to $\iota(v)$; the
	congestion of a physical edge $a$ is $m_a=|I(a)|$. Under an $(L,K)$-controlled
	scheme, $\sup_e|\gamma_e|\le L$ and $\sup_a m_a\le K$, and Theorem~\ref{thm:main}
	gives $\mathcal{E}^\Phi_H(\Theta) \le L\,\beta_\Phi(K)\, \mathcal{E}^\Phi_{\mathcal{B}^{\sharp}}(\theta)$.}

	\label{fig:fig01}
\end{figure}

\begin{theorem}[Gauge-energy repair]\label{thm:main}
	Suppose that $\mathcal{B}^{\sharp} = (T, F)$ is repaired in $H$ by an $(L,K)$-controlled
	repair scheme. Let $\Phi$ be a gauge such that $\alpha_\Phi(m) < \infty$ for
	$1 \le m \le K$. If $\mathcal{B}^{\sharp}$ supports a unit flow $\theta$ from $o$ to infinity with finite
	$\Phi$-energy, then the component of $\iota(o)$ in $H$ supports a unit flow $\Theta$
	satisfying
	\begin{equation}\label{eq:thm31}
	\mathcal{E}^\Phi_H(\Theta) \le L\,\beta_\Phi(K)\, \mathcal{E}^\Phi_{\mathcal{B}^{\sharp}}(\theta).
	\end{equation}
	In particular, for $\Phi(t) = t^q$, $q > 1$,
	\begin{equation}\label{eq:thm31q}
		\mathcal{E}^q_H(\Theta) \le L K^{q-1} \mathcal{E}^q_{\mathcal{B}^{\sharp}}(\theta).
	\end{equation}
\end{theorem}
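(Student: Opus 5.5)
The plan is to construct $\Theta$ explicitly by superposing path flows. First I will normalize the scheme: replacing each $\gamma_e$ by its loop-erasure yields a self-avoiding path from $\iota(u)$ to $\iota(v)$. This does not increase $|\gamma_e|$, and it can only shrink each $I(a)$, so the scheme remains $(L,K)$-controlled. For $e=\{u,v\}\in F$, fix an orientation $\vec e=(u,v)$ and let $\chi_{\gamma_e}$ be the unit path flow along $\gamma_e$ from $\iota(u)$ to $\iota(v)$. Thus $\chi_{\gamma_e}(x,y)=\pm1$ on the oriented physical edges of $\gamma_e$, with sign according to traversal direction, and $0$ elsewhere. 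Define
\begin{equation*}
\Theta=\sum_{e\in F}\theta(\vec e)\,\chi_{\gamma_e}.
\end{equation*}
This sum is well defined and antisymmetric. Indeed, for each physical edge $a$ only the $m_a\le K$ terms with $e\in I(a)$ are nonzero, and the choice of orientation of $e$ is irrelevant because reversing it flips the signs of both $\theta(\vec e)$ and $\chi_{\gamma_e}$.

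Next I will verify the divergence. By local finiteness, $\divop\Theta(x)$ is a finite sum, and $\divop\chi_{\gamma_e}=\mathbf 1_{\iota(u)}-\mathbf 1_{\iota(v)}$. Hence
\begin{equation*}
\divop\Theta(x)=\sum_{e\in F}\theta(\vec e)\bigl(\mathbf 1_{\iota(u)}(x)-\mathbf 1_{\iota(v)}(x)\bigr).
\end{equation*}
By injectivity of $\iota$, this equals $\divop\theta(t)$ if $x=\iota(t)$ and $0$ if $x\notin\iota(T)$. So $\Theta$ is a unit flow from $\iota(o)$ on $H$. To place it on the component $C$ of $\iota(o)$, I restrict $\Theta$ to edges of $C$. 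Every vertex of $C$ has all its incident edges in $C$, so the divergences inside $C$ are unchanged, and restriction cannot increase the energy. The restriction is therefore a unit flow from $\iota(o)$ in $C$. Moreover, $C$ must be infinite once the energy is finite, since on a finite component the divergences would sum to $0\neq 1$.

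The energy bound is the core step. Because $\Phi$ is increasing, $|\Theta(a)|\le\sum_{e\in I(a)}|\theta(e)|$ gives
\begin{equation*}
\Phi(|\Theta(a)|)\le\Phi\Bigl(\sum_{e\in I(a)}|\theta(e)|\Bigr)\le\beta_\Phi(m_a)\sum_{e\in I(a)}\Phi(|\theta(e)|)\le\beta_\Phi(K)\sum_{e\in I(a)}\Phi(|\theta(e)|).
\end{equation*}
The second inequality is \eqref{eq:jensen} with $m\bar x=\sum_i x_i$, and edges with $m_a=0$ contribute $\Phi(0)=0$. Summing over $a\in E_H$ and exchanging the order of summation (all terms are nonnegative) gives
\begin{equation*}
\mathcal{E}^\Phi_H(\Theta)\le\beta_\Phi(K)\sum_{e\in F}|\gamma_e|\,\Phi(|\theta(e)|)\le L\,\beta_\Phi(K)\,\mathcal{E}^\Phi_{\mathcal{B}^{\sharp}}(\theta).
\end{equation*}
This is \eqref{eq:thm31}, and \eqref{eq:thm31q} then follows from \eqref{eq:power}.

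The main point needing care is the bookkeeping in this step. The merging bound \eqref{eq:jensen} must be applied to the \emph{sum}, not the average, of the at most $K$ contributions. Each demand edge must be counted at most $|\gamma_e|$ times, and this is exactly why the loop-erasure normalization is needed: without it, a path repeating an edge would contribute twice there. The finiteness hypothesis $\alpha_\Phi(m)<\infty$ for $m\le K$ is used precisely to make $\beta_\Phi(K)$ finite.
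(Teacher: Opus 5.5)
Your proposal is correct and follows the paper's proof: you superpose the unit path flows weighted by $\theta(e)$, get the divergence from linearity and injectivity of $\iota$, and bound each physical edge with the merging estimate \eqref{eq:jensen} before exchanging $\sum_a\sum_{e\in I(a)}$ by Tonelli. Your loop-erasure normalization and the restriction of $\Theta$ to the component of $\iota(o)$ tidy up two points the paper leaves implicit (it assumes the $\gamma_e$ are simple paths, and asserts without restricting that $\Theta$ is already supported on that component), so your version is slightly more careful but not a different route.
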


\begin{proof}
	Orient every demand edge in \( F \). For each \( e \in F \), orient the repair path \( \gamma_e \) consistently with \( e \). That is, if  
	\( e = \{u, v\} \) is oriented from \( u \) to \( v \), then \( \gamma_e \) is oriented from \( \iota(u) \) to \( \iota(v) \). Let \( \eta_e \) denote the unit path  
	flow along \( \gamma_e \), defined as the antisymmetric edge function with \( \eta_e(a) = 1 \) if \( a \in \gamma_e \) is traversed in the  
	direction of \( \gamma_e \), \( \eta_e(a) = -1 \) if it is traversed in the opposite direction, and \( \eta_e(a) = 0 \) otherwise. Define
	\begin{equation}\label{eq:lift}
		\Theta = \sum_{e \in F} \theta(e)\, \eta_e.
	\end{equation}
	The sum is finite on each physical edge because $m_a \le K < \infty$, so $\Theta$ is a
	well-defined antisymmetric edge function on $H$.
	
	To identify its divergence, note that divergence is linear, so for every vertex $y$ of $H$,
	\begin{equation*}
		\divop \Theta(y) = \sum_{e \in F} \theta(e)\, \divop \eta_e(y).
	\end{equation*}
	If $y = \iota(x)$ for $x \in T$, the only nonzero contributions come from demand edges incident
	with $x$: an edge oriented out of $x$ contributes $+\theta(e)$ at $\iota(x)$, and an edge
	oriented into $x$ contributes $-\theta(e)$. Hence $\divop \Theta(\iota(x)) = \divop \theta(x)$.
	If $y \notin \iota(T)$, then $y$ is an internal vertex of every route $\gamma_e$ through it, at
	which the unit path flow has divergence zero; hence $\divop \Theta(y) = 0$. Since $\theta$ is a
	unit flow from $o$ to infinity, we obtain
	\begin{equation*}
		\divop \Theta(\iota(o)) = 1, \qquad \divop \Theta(y) = 0 \quad (y \ne \iota(o)).
	\end{equation*}
	Moreover, every demand edge in the support of $\theta$ lies in the component of $o$ in $\mathcal{B}^{\sharp}$,
	and every repair route connects physical terminals of that component; therefore $\Theta$ is
	supported on the component of $\iota(o)$ in $H$. Thus $\Theta$ is a unit flow from $\iota(o)$
	to infinity on that component.
	
	Fix $a \in E_H$ and set $m = m_a$. If $m = 0$, then $\Theta(a) = 0$. If $m \ge 1$, write
	$x_e = |\theta(e)|$ for $e \in I(a)$ and
	\begin{equation*}
		\bar{x} = \frac{1}{m} \sum_{e \in I(a)} x_e .
	\end{equation*}
	Then
	\begin{equation}\label{eq:merge}
		|\Theta(a)| \le \sum_{e \in I(a)} |\theta(e)| = m \bar{x}.
	\end{equation}
	By the estimate \eqref{eq:jensen}, we have
	\begin{equation}\label{eq:edge}	
		\begin{aligned}
			\Phi(|\Theta(a)|) \le \Phi(m \bar{x}) \le \alpha_\Phi(m) \Phi(\bar{x})
			\le \beta_\Phi(m_a) \sum_{e \in I(a)} \Phi(x_e)\\
			= \beta_\Phi(m_a) \sum_{e \in I(a)}\Phi(|\theta(e)|)
		\end{aligned}
	\end{equation}
	
	Since $m_a \le K$, the definition of $\beta_\Phi(K)$ gives $\beta_\Phi(m_a) \le \beta_\Phi(K)$,
	and therefore
	\begin{equation*}
		\Phi(|\Theta(a)|) \le \beta_\Phi(K) \sum_{e \in I(a)} \Phi(|\theta(e)|).
	\end{equation*}
	Summing over $a \in E_H$ and exchanging the order of summation,
	\begin{align}
		\mathcal{E}^\Phi_H(\Theta)
		&= \sum_{a \in E_H} \Phi(|\Theta(a)|)
		\le \beta_\Phi(K) \sum_{a \in E_H} \sum_{e \in I(a)} \Phi(|\theta(e)|) \nonumber\\
		&= \beta_\Phi(K) \sum_{e \in F} |\gamma_e|\, \Phi(|\theta(e)|)
		\le L\, \beta_\Phi(K) \sum_{e \in F} \Phi(|\theta(e)|)
		= L\, \beta_\Phi(K)\, \mathcal{E}^\Phi_{\mathcal{B}^{\sharp}}(\theta) < \infty, \label{eq:sum}
	\end{align}
	which proves \eqref{eq:thm31}. The power-gauge case \eqref{eq:thm31q} follows from
	\eqref{eq:power}, which gives $\beta_\Phi(K) = K^{q-1}$.
\end{proof}

	Moreover, this proof has a useful generalization. The proof of Theorem~\ref{thm:main} uses only the linearity of the divergence, the bound
$m_a \le K$, and the convexity estimate \eqref{eq:jensen}. It does not use the structure of
the flow beyond its finite $\Phi$-energy. The same argument therefore applies to finite-energy
flows between two finite terminal sets.  If $\theta$ is a unit flow from $u \in T$ to $v \in T$
in $\mathcal{B}^{\sharp}$ with finite $\Phi$-energy, then the lifted flow $\Theta$ is a unit
flow from $\iota(u)$ to $\iota(v)$ in $H$ and satisfies the same inequality
\eqref{eq:thm31}. 
We will give an explanation of the necessity of the parameters $K$ and $L$ under this condition in Proposition~\ref{prop:sharp} below.

\begin{remark}\label{rem:thm31}
	The constant $L\,\beta_\Phi(K)$ is the full cost of the repair scheme: the route length $L$
	charges every unit of macroscopic flow once per physical edge it traverses, and the dilation
	factor $\beta_\Phi(K)$ charges the worst-case congestion $K$. When the selected routes are
	edge-disjoint, $K = 1$ and $\beta_\Phi(1) = 1$, so \eqref{eq:thm31} reduces to
	$\mathcal{E}^\Phi_H(\Theta) \le L\, \mathcal{E}^\Phi_{\mathcal{B}^{\sharp}}(\theta)$: congestion is free only for disjoint
	routes, which is the setting of classical parallel-redundancy constructions. For the quadratic
	gauge, \eqref{eq:thm31q} reads $\mathcal{E}_H(\Theta) \le LK\, \mathcal{E}_{\mathcal{B}^{\sharp}}(\theta)$, which is the
	rough-embedding bound in energy form.
\end{remark}

When the gauge is quadratic, the gauge-energy estimate reduces to the classical finite-energy criterion for transience. This gives the following corollary.

\begin{corollary}[Quadratic transience]\label{cor:transience}
	Under the assumptions of Theorem~\ref{thm:main}, if  $\Phi(t) = t^2$ and $\mathcal{B}^{\sharp}$ is transient,
	then the repaired physical component is transient. Moreover,
	\begin{equation*}
		\mathcal{E}_H(\Theta) \le LK\, \mathcal{E}_{\mathcal{B}^{\sharp}}(\theta).
	\end{equation*}
\end{corollary}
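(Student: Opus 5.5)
The plan is to specialize Theorem~\ref{thm:main} to the quadratic gauge $\Phi(t)=t^2$ and then invoke the electrical-network criterion recalled in Section~\ref{sec:prelim}: a locally finite infinite graph is transient if and only if it supports a unit flow from a vertex to infinity with finite quadratic energy.

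\textbf{Step 1: produce a finite-energy flow on the demand side.} Since $\mathcal{B}^{\sharp}$ is transient, the criterion (applied to the component of $o$ in $\mathcal{B}^{\sharp}$, which is locally finite because $\mathcal{B}$ is) supplies a unit flow $\theta$ from $o$ to infinity with $\mathcal{E}_{\mathcal{B}^{\sharp}}(\theta)<\infty$. One point needs care here. Transience of a possibly disconnected $\mathcal{B}^{\sharp}$ should be read as transience of the component containing $o$. In the connected case transience does not depend on the base vertex, so taking $o$ as source is harmless.

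\textbf{Step 2: lift the flow.} The gauge $\Phi(t)=t^2$ has $\alpha_\Phi(m)=m^2<\infty$ for every $m$, so the hypothesis of Theorem~\ref{thm:main} holds. By \eqref{eq:power}, $\beta_\Phi(K)=K$. Theorem~\ref{thm:main} then gives a unit flow $\Theta$ from $\iota(o)$ to infinity, supported on the component of $\iota(o)$ in $H$, with
\[
\mathcal{E}_H(\Theta)\le LK\,\mathcal{E}_{\mathcal{B}^{\sharp}}(\theta)<\infty .
\]
This is exactly the displayed inequality \eqref{eq:thm31q} with $q=2$.

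\textbf{Step 3: conclude transience.} The component of $\iota(o)$ in $H$ is locally finite, because $H$ is. It is infinite, because a unit flow to infinity cannot live on a finite connected graph: summing divergences over a finite vertex set gives $0\neq 1$. It carries a finite-energy unit flow to infinity, namely $\Theta$. The converse direction of the criterion therefore shows that this component is transient.

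The main obstacle is not analytic but bookkeeping. One must make sure $\Theta$ is a genuine unit flow to infinity on that component, with zero divergence off $\iota(o)$ and support inside the component. Both facts are already established in the proof of Theorem~\ref{thm:main}, so the corollary is a direct specialization.
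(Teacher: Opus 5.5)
Your proposal is correct and follows essentially the same route as the paper: specialize Theorem~\ref{thm:main} to $\Phi(t)=t^2$ using $\beta_\Phi(K)=K$ from \eqref{eq:power}, then apply Thomson's principle to conclude that the component of $\iota(o)$ in $H$ is transient. Your Step~1 is redundant, since the hypotheses of Theorem~\ref{thm:main} already supply $\theta$; your checks that the component is locally finite and infinite are extra bookkeeping that the paper leaves implicit.
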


\begin{proof}
	For $\Phi(t) = t^2$ we have $\alpha_\Phi(m) = m^2$, $\beta_\Phi(m) = m$ and
	$\beta_\Phi(K) = K$ by \eqref{eq:power}. Theorem~\ref{thm:main} gives a unit flow $\Theta$
	from $\iota(o)$ to infinity on the repaired component with
	\begin{equation*}
		\mathcal{E}_H(\Theta) = \sum_{a} \Theta(a)^2 \le LK \sum_{e} \theta(e)^2 = LK\, \mathcal{E}_{\mathcal{B}^{\sharp}}(\theta) < \infty.
	\end{equation*}
	By Thomson's principle, a locally finite graph is transient if and only if it admits a unit
	flow to infinity of finite quadratic energy~\cite{DS,LP}. Hence the component of $\iota(o)$ in $H$ is
	transient.
\end{proof}

The previous result assumes uniform repair costs through a common route-length bound and congestion
bound. We next extend this result to weighted settings where different repair paths and physical
edges may have different costs.
\begin{theorem}[Weighted variable-cost repair]\label{thm:weighted}
	Let $\mathcal{B}^{\sharp} = (T, F)$ be repaired in $H$ by paths $\{\gamma_e : e \in F\}$. Suppose physical
	edges carry deterministic weights $w_a > 0$. For each physical edge $a$, let $m_a = |I(a)|$,
	and define
	\begin{equation}\label{we:eq}
		W_e(\Phi, w) = \sum_{a \in \gamma_e} w_a\, \beta_\Phi(m_a).
	\end{equation}
	If $\mathcal{B}^{\sharp}$ supports a unit flow $\theta$ satisfying
	\begin{equation*}
		\sum_{e \in F} W_e(\Phi, w)\, \Phi(|\theta(e)|) < \infty,
	\end{equation*}
	then the lifted physical flow $\Theta$ satisfies
	\begin{equation*}
		\sum_{a \in E_H} w_a\, \Phi(|\Theta(a)|) \le \sum_{e \in F} W_e(\Phi, w)\, \Phi(|\theta(e)|).
	\end{equation*}
	For the unweighted case $w_a \equiv 1$, this reduces to the variable-cost estimate with
	\begin{equation*}
		W_e = \sum_{a \in \gamma_e} \beta_\Phi(m_a).
	\end{equation*}
	For quadratic weighted electrical networks, taking $\Phi(t) = t^2$ and $w_a = r_a$ gives
	\begin{equation}\label{eq:quadratic}
		\sum_{a \in E_H} r_a\, \Theta(a)^2 \le
		\sum_{e \in F} \Bigl( \sum_{a \in \gamma_e} r_a m_a \Bigr) \theta(e)^2 .
	\end{equation}
\end{theorem}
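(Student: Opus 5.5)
We reuse the lifting construction from the proof of Theorem~\ref{thm:main} essentially unchanged and only alter the final summation. First orient each demand edge in $F$ and orient its route $\gamma_e$ consistently, with unit path flow $\eta_e$. Then set $\Theta=\sum_{e\in F}\theta(e)\eta_e$ as in \eqref{eq:lift}. The divergence computation from that proof uses only linearity and the endpoint structure of the routes, so it carries over verbatim: $\Theta$ is a unit flow from $\iota(o)$ on the component of $\iota(o)$. Well-definedness needs care, since no uniform $K$ is assumed here. We take $m_a<\infty$ for every edge $a$ that contributes to the weighted sum; this is implicit in the hypothesis, because $\beta_\Phi(m_a)$ must be finite for $W_e$ to be meaningful. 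If $m_a=\infty$ for some $a$ on a route carrying nonzero flow, the right-hand side is infinite and there is nothing to prove.

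The core step is the pointwise bound at a single physical edge $a$ with $m=m_a\ge1$. As in \eqref{eq:merge}, $|\Theta(a)|\le\sum_{e\in I(a)}|\theta(e)|$. Monotonicity of $\Phi$ and the merging estimate \eqref{eq:jensen} then give
\begin{equation*}
\Phi(|\Theta(a)|)\le \beta_\Phi(m_a)\sum_{e\in I(a)}\Phi(|\theta(e)|).
\end{equation*}
Unlike in Theorem~\ref{thm:main}, we do not replace $\beta_\Phi(m_a)$ by a uniform bound. Instead we keep it attached to the edge and multiply by $w_a>0$.

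Next we sum over $a\in E_H$ and exchange the order of summation, which is justified by Tonelli since all terms are nonnegative:
\begin{equation*}
\sum_{a\in E_H} w_a\Phi(|\Theta(a)|)\le\sum_{a\in E_H}\sum_{e\in I(a)} w_a\beta_\Phi(m_a)\Phi(|\theta(e)|)=\sum_{e\in F}\Bigl(\sum_{a\in\gamma_e}w_a\beta_\Phi(m_a)\Bigr)\Phi(|\theta(e)|).
\end{equation*}
The inner sum is exactly $W_e(\Phi,w)$ from \eqref{we:eq}, and the right-hand side is finite by hypothesis.

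The special cases are then immediate. Setting $w_a\equiv1$ gives the unweighted variable-cost form. For $\Phi(t)=t^2$ we have $\beta_\Phi(m)=m$ by \eqref{eq:power}, so taking $w_a=r_a$ yields \eqref{eq:quadratic}. The only real subtlety is the bookkeeping when routes traverse an edge more than once or when $m_a$ is unbounded; we handle this by working with simple paths, as Definition~\ref{def:scheme} says ``path'', and by noting that all sums are of nonnegative terms.
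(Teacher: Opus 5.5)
Your proposal is correct and is essentially the paper's proof: reuse the lift $\Theta=\sum_{e\in F}\theta(e)\eta_e$ from Theorem~\ref{thm:main}, keep the per-edge bound \eqref{eq:edge} with $\beta_\Phi(m_a)$ attached to the edge rather than replacing it by a uniform constant, multiply by $w_a$, and exchange the order of summation using nonnegativity. Your remark that, without a uniform $K$, well-definedness of $\Theta$ rests on $m_a<\infty$ (implicit in $W_e(\Phi,w)$ being defined) covers a point the paper passes over; you should drop the aside that $m_a=\infty$ forces an infinite right-hand side, since this is not automatic for gauges with bounded $\beta_\Phi$, such as $\Phi(t)=t$.
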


\begin{proof}
	The lifted flow is the same as in Theorem~\ref{thm:main}. For every physical edge $a$, the
	estimate \eqref{eq:edge} gives
	\begin{equation*}
		\Phi(|\Theta(a)|) \le \beta_\Phi(m_a) \sum_{e \in I(a)} \Phi(|\theta(e)|).
	\end{equation*}
	Multiplying by $w_a$ and summing over $a$ yields
	\begin{align*}
		\sum_{a \in E_H} w_a\, \Phi(|\Theta(a)|)
		&\le \sum_{a \in E_H} w_a\, \beta_\Phi(m_a) \sum_{e \in I(a)} \Phi(|\theta(e)|)\\
		&= \sum_{e \in F} \Bigl( \sum_{a \in \gamma_e} w_a\, \beta_\Phi(m_a) \Bigr)
		\Phi(|\theta(e)|),
	\end{align*}
	which is the desired inequality. The quadratic specialization \eqref{eq:quadratic} follows
	from $\beta_\Phi(m) = m$ for $\Phi(t) = t^2$.
\end{proof}

\begin{remark}\label{rem:weighted}
	The weighted formulation is the bridge to random repair costs: if
	the selected route for a demand edge has random length or passes through random
	edges with random weights, then $W_e(\Phi,w)$ in~\eqref{we:eq} is a random cost, and Theorem~\ref{thm:weighted}
	applies conditionally on the routes. This is how random repair lengths enter the
	argument in Section~\ref{sec:prob} and how chemical-distance estimates~\cite{AP,GaretMarchand2007,Luchtrath2026} are used in Section~\ref{sec5:6}. 
\end{remark}

We further establish the \textbf{sharpness of the lifting bound.}
The constant $L\,\beta_\Phi(K)$ in Theorem~\ref{thm:main} is not an artifact
of the proof. We show that for power gauges it is attained by an explicit
two-layer construction, and that neither factor can be dropped.

\begin{proposition}[Sharpness]\label{prop:sharp}
	Let $\Phi(t)=t^q$ with $q>1$. For every integers $L\ge 1$ and $K\ge 1$ there
	exist a bounded-degree demand network $\mathcal B$, a physical network $H$,
	an $(L,K)$-controlled repair scheme, and a unit flow $\theta$ from source to sink(  
	from $u$ to $v$ in $\mathcal B$) such that the lifted flow $\Theta$ satisfies
	\begin{equation}\label{eq:sharp}
		\mathcal{E}_H^\Phi(\Theta)=L\,\beta_\Phi(K)\,\mathcal{E}_{\mathcal B}^\Phi(\theta).
	\end{equation}
Consequently, within the source–sink formulation of Theorem~\ref{thm:main}, neither the route-length factor $L$ nor the dilation factor
	$\beta_\Phi(K)$ can be removed from~\eqref{eq:thm31}.
\end{proposition}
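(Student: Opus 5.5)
Following the figure, I would use a two-terminal construction in which all inequalities in the proof of Theorem~\ref{thm:main} become equalities. Take the demand network $\mathcal B$ with two terminals $u,v$ joined by $K$ parallel demand edges $e_1,\dots,e_K$ (bounded degree $K$). The flow is $\theta(e_i)=1/K$ from $u$ to $v$ for each $i$. This is a unit flow from $u$ to $v$, and
\[
\mathcal{E}^q_{\mathcal B}(\theta)=K\cdot K^{-q}=K^{1-q}.
\]
For the physical network, let $H$ be a single path of length $L$ with vertices $\iota(u)=y_0,y_1,\dots,y_L=\iota(v)$. Every route is the whole path: $\gamma_{e_i}=H$, all oriented from $\iota(u)$ to $\iota(v)$. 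Then $|\gamma_{e_i}|=L$, and each physical edge $a$ has $m_a=K$, so the scheme is $(L,K)$-controlled.

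Next I would compute the lifted flow \eqref{eq:lift}. On each physical edge,
\[
\Theta(a)=\sum_{i=1}^K \tfrac1K=1,
\]
hence $\mathcal{E}^q_H(\Theta)=L$. By \eqref{eq:power}, $\beta_\Phi(K)=K^{q-1}$, so
\[
L\,\beta_\Phi(K)\,\mathcal{E}^q_{\mathcal B}(\theta)=L\,K^{q-1}K^{1-q}=L,
\]
which gives \eqref{eq:sharp}. Conceptually, equality holds because every route has exactly length $L$ and every edge has congestion exactly $K$. Moreover, all contributions on an edge have the same sign and equal magnitude, so the triangle inequality \eqref{eq:merge} and Jensen's inequality \eqref{eq:jensen} are both tight. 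Finally, $\Phi(mt)/\Phi(t)=m^q$ is constant for a power gauge, so $\alpha_\Phi$ is attained.

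For the ``consequently'' part, I would argue that the identity \eqref{eq:sharp} holds for every pair $(L,K)$. Suppose a bound $C(L,K)\,\mathcal{E}^\Phi(\theta)$ held for all lifts with $C$ independent of $L$, for instance $C=\beta_\Phi(K)$ alone. Taking $L$ large in the above example with $K$ fixed gives a contradiction. Similarly, fixing $L$ and letting $K$ grow shows that a constant independent of $K$, such as $L$ alone, fails for $q>1$, since $\beta_\Phi(K)=K^{q-1}\to\infty$. More precisely, the exact constant $L\beta_\Phi(K)$ cannot be improved for any given pair $(L,K)$.

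The construction is elementary, so no step is truly hard. The only delicate point is to ensure that the example fits the source–sink variant noted after the proof of Theorem~\ref{thm:main}, since the statement is about flows to infinity. Here $\theta$ is a unit flow from $u$ to $v$, and the lifted flow is a unit flow from $\iota(u)$ to $\iota(v)$, which is exactly that setting. If a version to infinity were desired, I would concatenate infinitely many scaled copies along a ray. That makes the flow values vary and the verification less clean, so I would stay with the source–sink formulation, as the statement specifies.
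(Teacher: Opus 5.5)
Your proposal is correct and is essentially the paper's own proof: the same $K$ parallel demand edges carrying $\theta(e_i)=1/K$, all routed along a single physical path of length $L$, so that $\Theta\equiv 1$ and both sides of \eqref{eq:sharp} equal $L$. Your argument for the ``consequently'' part (letting $L$ grow with $K$ fixed, and letting $K$ grow with $L$ fixed) also matches the paper's Remark~\ref{rem:sharp}, and it stays in the same source--sink formulation.
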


\begin{proof}
	Set $M=K$. Let $\mathcal B$ consist of $M$ parallel demand edges
	$e_1,\dots,e_M$ joining two terminals $u,v$. Define a unit source--sink
	flow by $\theta(e_i)=1/M$ for every $i$, so that
	$\divop\theta(u)=1$, $\divop\theta(v)=-1$, and $\divop\theta(x)=0$ otherwise.
	Then
	\[
	\mathcal{E}_{\mathcal B}^\Phi(\theta)=\sum_{i=1}^M \Phi(1/M)=M^{1-q}.
	\]
	Let $H$ contain a single path $\gamma=(a_1,\dots,a_L)$ of length $L$ joining
	$\iota(u)$ to $\iota(v)$, and route every $e_i$ through $\gamma$. On each
	physical edge $a_j$ all $M$ flows superpose in the same direction, so
	$\Theta(a_j)=M\cdot (1/M)=1$. Hence
	\[
	\mathcal{E}_H^\Phi(\Theta)=\sum_{j=1}^L \Phi(1)=L.
	\]
	The scheme has $\sup_e|\gamma_e|=L$ and $\sup_a m_a=M=K$, and for
	$\Phi(t)=t^q$ one has $\beta_\Phi(K)=K^{q-1}=M^{q-1}$. Therefore
	\[
	L\,\beta_\Phi(K)\, \mathcal{E}_{\mathcal B}^\Phi(\theta)
	=L\,M^{q-1}\cdot M^{1-q}=L
	=\mathcal{E}_H^\Phi(\Theta),
	\]
	which proves~\eqref{eq:sharp}.
\end{proof}

\begin{remark}[Isolating the two factors]\label{rem:sharp}
	The two extremal cases isolate the two contributions.
	\begin{itemize}
		\item \emph{Edge-disjoint routes ($K=1$).} Then $\beta_\Phi(1)=1$ and the
		construction gives $\mathcal{E}_H^\Phi(\Theta)=L\, \mathcal{E}_{\mathcal B}^\Phi(\theta)$.
		The factor $L$ is therefore unavoidable: a single demand edge routed over a
		path of length $L$ incurs $L$ times the energy of the edge itself.
		\item \emph{Single shared edge ($L=1$).} Then
		$\mathcal{E}_H^\Phi(\Theta)=\beta_\Phi(K)\,\mathcal{E}_{\mathcal B}^\Phi(\theta)$.
		The dilation factor $\beta_\Phi(K)=K^{q-1}$ is also unavoidable: $K$ parallel
		unit flows superposed on one edge carry $K^q$ energy against $K$ on the demand
		side.
	\end{itemize}
	For a general convex gauge $\Phi$, the same construction yields
	$\mathcal{E}_H^\Phi(\Theta)/\mathcal{E}_{\mathcal B}^\Phi(\theta)
	=L\,\Phi(M)/(M\,\Phi(1))$, which equals $L\,\beta_\Phi(M)$ whenever $t=1$
	attains the supremum in the definition of $\beta_\Phi(M)$. For power gauges
	this holds for every $t>0$, so equality is exact.
\end{remark}

\section{Sufficient conditions of probabilistic repair}\label{sec:prob}
In this section, we extend the deterministic repair theorems to random network failures. The key question is whether local repair routes remain available after Bernoulli edge percolation.
Let $H[p]$ denote Bernoulli bond percolation on the physical network. Large-scale properties of random subgraphs generated by percolation have been studied extensively, including geometric and analytic properties of infinite clusters~\cite{Barlow2004,Sapozhnikov2017}. For each demand edge $e$,
a family of candidate physical routes $\Gamma_e$ is specified. The demand edge is repairable
if at least one route in $\Gamma_e$ is open in $H[p]$. Its repairability indicator is denoted
by $Y_e$. We first describe the random repair model.

\begin{definition}[Macroscopic gauge-energy stability]\label{def:stability}
	Let $\mathcal{B}$ be an infinite bounded-degree demand network, and let $\Phi$ be a gauge. Fix
	$r \in (0, 1]$. We say that $(\mathcal{B}, \Phi, r)$ has \emph{macroscopic gauge-energy
		stability} if Bernoulli-$r$ bond percolation on $\mathcal{B}$ has, with positive probability,
	a connected component containing the root that supports a unit flow $\theta$ with
	\begin{equation*}
		\mathcal{E}^\Phi(\theta) < \infty.
	\end{equation*}
	For $\Phi(t) = t^2$, this is the usual transience of the root component.
\end{definition}

\begin{proposition}[Conditional repair criterion]\label{prop:conditional}
	On the event that a connected usable demand subnetwork $D = (T_D, F_D)$ containing $o$ supports
	a unit flow $\theta$ satisfying
	\begin{equation*}
		\sum_{e \in F_D} W_e(\Phi, w)\, \Phi(|\theta(e)|) < \infty,
	\end{equation*}
	where $W_e(\Phi, w)$ is the cost of the selected open repair route, the open physical cluster
	of $\iota(o)$ supports a finite weighted $\Phi$-energy unit flow. In the quadratic unweighted
	case, this cluster is transient.
\end{proposition}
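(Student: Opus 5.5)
The plan is to work pathwise and apply Theorem~\ref{thm:weighted} realization by realization. Fix a configuration of $H[p]$ in the event described in the statement. For each $e\in F_D$ the demand edge is usable, so $\Gamma_e$ contains at least one open route. Using a fixed deterministic tie-breaking rule (say an enumeration of the finite family $\Gamma_e$), I will select one open route $\gamma_e$. The costs $W_e(\Phi,w)$ in the hypothesis refer to exactly these selected routes, with $m_a=|\{e\in F_D: a\in\gamma_e\}|$. This gives a repair scheme, in the sense of Definition~\ref{def:scheme}, for the demand network $D$ inside the open subgraph $H[p]$. All routes are open, so the whole construction lives in the graph $H[p]$.

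Next I apply Theorem~\ref{thm:weighted} with $\mathcal{B}^{\sharp}$ replaced by $D$ and $H$ replaced by $H[p]$. The lifted flow $\Theta=\sum_{e\in F_D}\theta(e)\eta_e$ then satisfies
\[
\sum_{a} w_a\Phi(|\Theta(a)|)\le \sum_{e\in F_D}W_e(\Phi,w)\Phi(|\theta(e)|)<\infty .
\]
There is one technical point. Theorem~\ref{thm:weighted} silently needs $\Theta(a)$ to be a finite sum. The hypothesis gives $\beta_\Phi(m_a)<\infty$ wherever it enters $W_e$, but not $m_a<\infty$. I will deduce finiteness from the finiteness of the weighted sum. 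If some open edge $a$ had $m_a=\infty$, then infinitely many $e$ would have $W_e\ge w_a\beta_\Phi(m_a)$. I expect this to be handled either by implicitly assuming finite congestion (local finiteness of the candidate families, as in the bounded-range models) or by noting that an infinite congestion makes $\beta_\Phi(m_a)$ undefined, so the hypothesis excludes it. I will state this explicitly as the place where care is needed.

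The divergence computation is the one from the proof of Theorem~\ref{thm:main}. It gives $\divop\Theta(\iota(o))=1$ and zero divergence elsewhere. Each route $\gamma_e$ is open and joins terminals of the connected subnetwork $D$ containing $o$. Hence $\Theta$ is supported on the open cluster of $\iota(o)$, and it is a unit flow to infinity there with finite weighted $\Phi$-energy. In the quadratic unweighted case ($\Phi(t)=t^2$, $w_a\equiv1$) the energy is $\sum_a\Theta(a)^2<\infty$. Transience of the cluster then follows from Thomson's principle, exactly as in Corollary~\ref{cor:transience}.

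The main obstacle is not analytic but measurability and selection. The routes must be chosen as a measurable function of the configuration so that $W_e$ is a genuine random variable. The fixed enumeration of each finite $\Gamma_e$ provides this. Beyond that, the statement is a conditional (pathwise) consequence of Theorem~\ref{thm:weighted}.
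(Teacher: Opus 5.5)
Your proposal is correct and is essentially the paper's proof. Like the paper, you fix the configuration and the selected open routes, treat them as a deterministic repair scheme for $D$ inside $H[p]$, apply Theorem~\ref{thm:weighted} to obtain the lifted unit flow on the open cluster of $\iota(o)$ with the weighted bound, and finish the quadratic unweighted case with Thomson's principle.

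Your caveat about $m_a<\infty$ is a point the paper leaves implicit: its proof of Theorem~\ref{thm:weighted} reuses the well-definedness argument of Theorem~\ref{thm:main}, which relied on $m_a\le K$. Your second resolution is the clean way to close it: $W_e(\Phi,w)$ is only defined when $m_a<\infty$ for every $a\in\gamma_e$. The attempted deduction from finiteness of the weighted sum, as you wrote it, does not by itself produce a contradiction.
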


\begin{proof}
	Condition on the percolation configuration and on the selected routes. On the event described,
	the selected open routes form a deterministic repair scheme for the subnetwork $D$ inside the
	open physical network $H[p]$, with route costs $W_e(\Phi, w)$. The demand flow $\theta$ has
	\begin{equation*}
		\sum_{e \in F_D} W_e(\Phi, w)\, \Phi(|\theta(e)|) < \infty,
	\end{equation*}
	so Theorem~\ref{thm:weighted} applies: the lifted flow $\Theta$ is a unit flow from
	$\iota(o)$ to infinity on the open cluster of $\iota(o)$ and satisfies
	\begin{equation*}
		\sum_{a} w_a\, \Phi(|\Theta(a)|)
		\le \sum_{e \in F_D} W_e(\Phi, w)\, \Phi(|\theta(e)|) < \infty.
	\end{equation*}
	Thus the open cluster supports a finite weighted $\Phi$-energy unit flow. In the quadratic
	unweighted case $w_a \equiv 1$ and $\Phi(t) = t^2$, this flow has finite quadratic energy, and
	the cluster is transient by Thomson's principle~\cite{DS,LP}.
\end{proof}

We use two standard inputs from percolation theory. First, the domination theorem of
Liggett, Schonmann and Stacey \cite{LSS} implies that, on the bounded-degree graph classes
under consideration, for every dependence range $R < \infty$ and every target density
$r \in (0, 1)$, there is a number $\rho = \rho(R, r) < 1$ such that every $R$-dependent bond
field $(Y_e)$ satisfying
\begin{equation}\label{eq:domination}
	\inf_{e} \mathbb{P}(Y_e = 1) \ge \rho
\end{equation}
stochastically dominates independent Bernoulli-$r$ bond percolation. 
Second, chemical-distance estimates for supercritical Bernoulli percolation on \(\mathbb{Z}^d\),
established by Antal and Pisztora \cite{AP} and further developed by
Garet and Marchand \cite{Garet2004}, provide linear control of open-path
lengths with exponentially decaying deviation probabilities.

The following theorem is the form in which the domination input is used in this paper.
Its proof is included because it is the bridge between finite-dependent local repair and the
deterministic gauge-energy comparison.

\begin{theorem}[Finite-dependent repair criterion]\label{thm:fd}
	Let $\mathcal{B}$ be an infinite bounded-degree demand network for which $(\mathcal{B},
	\Phi, r)$ has macroscopic gauge-energy stability. Let a bounded-range local repair model be
	placed over $\mathcal{B}$, and let $(Y_e)_{e \in E_{\mathcal{B}}}$ be its repairability field
	under Bernoulli failures of the physical edges. Assume that:
	\begin{enumerate}
		\item[(i)] $(Y_e)$ is $R$-dependent;
		\item[(ii)] for the domination threshold $\rho(R, r)$,
		\begin{equation*}
			\inf_{e \in E_{\mathcal{B}}} \mathbb{P}(Y_e = 1) \ge \rho(R, r);
		\end{equation*}
		\item[(iii)] whenever $Y_e = 1$, one can choose an open repair route for $e$ so that every
		selected route family over the subgraph under consideration is $(L,K)$-controlled.
	\end{enumerate}
	Then, with positive probability, the open physical network contains a component supporting a
	finite $\Phi$-energy unit flow. More precisely, on the event that the dominated Bernoulli-$r$
	demand component $D$ carries a unit flow $\theta$ with $\mathcal{E}^\Phi_D(\theta) < \infty$, the
	lifted physical flow $\Theta$ satisfies
	\begin{equation}\label{eq:thm41}
		\mathcal{E}^\Phi_H(\Theta) \le L\, \beta_\Phi(K)\, \mathcal{E}^\Phi_D(\theta).
	\end{equation}
	For $\Phi(t) = t^2$, this physical component is transient.
\end{theorem}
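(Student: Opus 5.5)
The plan is to couple the repairability field with an independent Bernoulli-$r$ bond percolation on $\mathcal{B}$. The coupling comes from the Liggett--Schonmann--Stacey domination input~\eqref{eq:domination}, and the deterministic lifting then follows from Theorem~\ref{thm:main} applied pathwise.

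\textbf{Step 1: coupling.} By (i) and (ii), the domination statement recorded before the theorem gives a probability space carrying the physical percolation $H[p]$, hence the field $(Y_e)$, together with an independent Bernoulli-$r$ bond field $(Z_e)_{e\in E_{\mathcal B}}$. The coupling satisfies $Z_e\le Y_e$ for all $e$ almost surely. One point needs care. Domination is a statement about laws of $(Y_e)$ only, so I will construct $(Z_e)$ on an enlarged space by first realizing the monotone coupling of the pair of fields, and then sampling $H[p]$ conditionally on $(Y_e)$. This keeps the physical configuration and $Z$ jointly defined, with $\{Z_e=1\}\subseteq\{Y_e=1\}$.

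\textbf{Step 2: positive-probability event.} By macroscopic gauge-energy stability of $(\mathcal B,\Phi,r)$, the event $A$ has positive probability. Here $A$ is the event that the open root component $D=(T_D,F_D)$ of $Z$ supports a unit flow $\theta$ from $o$ to infinity with $\mathcal E^\Phi_D(\theta)<\infty$. Stability is a property of the law of $Z$, so $\mathbb P(A)>0$ transfers to the coupled space. On $A$, every $e\in F_D$ has $Y_e=1$, so each such $e$ has an open route in $\Gamma_e$. By (iii) we choose one, say $\gamma_e$, measurably, for instance by a fixed enumeration of $\Gamma_e$, so that the family $\{\gamma_e:e\in F_D\}$ is $(L,K)$-controlled.

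\textbf{Step 3: deterministic lifting.} Fix an outcome in $A$. The routes $\gamma_e$ lie in the open subgraph $H[p]$, so they form an $(L,K)$-controlled repair scheme for the usable network $D$ inside the graph $H[p]$. I will apply Theorem~\ref{thm:main} with $\mathcal B^\sharp=D$ and physical network $H[p]$; the hypothesis $\alpha_\Phi(m)<\infty$ for $m\le K$ is implicit in the assumption that $\beta_\Phi(K)$ is finite. The theorem gives a unit flow $\Theta$ from $\iota(o)$ to infinity, supported on the open cluster of $\iota(o)$, satisfying~\eqref{eq:thm41}. Equivalently, one can invoke Proposition~\ref{prop:conditional} with $W_e\le L\beta_\Phi(K)$. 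The energy of $\Theta$ is finite on $A$, so the open physical network contains such a component with probability at least $\mathbb P(A)>0$. For $\Phi(t)=t^2$, Corollary~\ref{cor:transience}, through Thomson's principle, gives transience.

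The main obstacle is Step 1: making the coupling rigorous so that the dominated field and the physical configuration live on a common space with $Z\le Y$ pointwise. It also requires interpreting (iii) so that controlledness holds for the particular random subfamily $F_D$. I would handle the latter by reading (iii) as a uniform, deterministic statement about every subfamily of repairable edges, which makes Step 3 purely pathwise.
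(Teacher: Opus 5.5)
Your proposal is correct and follows the paper's proof: domination coupling $\omega_e\le Y_e$, a positive-probability event from macroscopic stability of the Bernoulli-$r$ demand component, selection of $(L,K)$-controlled open routes via (iii), and a pathwise application of Theorem~\ref{thm:main}, with Thomson's principle in the quadratic case. Your extra care in gluing the physical configuration $H[p]$ onto the monotone coupling, and in reading (iii) as a statement about every selection of open routes, supplies details that the paper's proof leaves implicit.
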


\begin{proof}
	By the finite-dependent domination input, there exists a coupling of the repairability field
	$(Y_e)$ with an independent Bernoulli-$r$ bond percolation field $(\omega_e)$ on
	$E_{\mathcal{B}}$ such that
	\begin{equation*}
		\omega_e \le Y_e \quad \text{for all } e \in E_{\mathcal{B}}
	\end{equation*}
	almost surely. Let
	\begin{equation*}
		F_D = \{e \in E_{\mathcal{B}} : \omega_e = 1\}
	\end{equation*}
	and let $D = (T_D, F_D)$ be the corresponding open demand subnetwork. Since
	$(\mathcal{B}, \Phi, r)$ has macroscopic gauge-energy stability, with positive probability the
	component of $o$ in $D$ supports a unit flow $\theta$ satisfying
	\begin{equation*}
		\mathcal{E}^\Phi_D(\theta) = \sum_{e \in F_D} \Phi(|\theta(e)|) < \infty.
	\end{equation*}
	On the same event, every edge $e \in F_D$ is repairable, because $\omega_e = 1$ implies
	$Y_e = 1$. Choose for every $e \in F_D$ one open repair route $\gamma_e$. By assumption
	(iii), the chosen routes may be selected so that
	\begin{equation*}
		\sup_{e \in F_D} |\gamma_e| \le L, \qquad
		\sup_{a \in E_H} \#\{e \in F_D : a \in \gamma_e\} \le K.
	\end{equation*}
	The selected routes form an $(L,K)$-controlled repair scheme for $D$ inside the open physical
	network, and the deterministic repair theorem therefore gives
	\begin{equation*}
		\mathcal{E}^\Phi_H(\Theta) \le L\, \beta_\Phi(K) \sum_{e \in F_D} \Phi(|\theta(e)|)
		= L\, \beta_\Phi(K)\, \mathcal{E}^\Phi_D(\theta) < \infty.
	\end{equation*}
	When $\Phi(t) = t^2$, the existence of a finite quadratic-energy unit flow is equivalent to
	transience by Thomson's principle.
\end{proof}

\begin{remark}\label{rem:fd}
	The theorem is a sufficient condition in the following precise sense: it requires only the marginal
	condition (ii), the finite dependence range in (i), and the deterministic control (iii) of the
	selected routes; it does not require independence of the repair mechanism, nor any
	information about the geometry of the physical network beyond (iii). The threshold
	$\rho(R, r)$ is generally not explicit, but for every fixed $R < \infty$ and $r \in (0,1)$ it
	is strictly below $1$, so condition (ii) is a high-density condition on the local repair
	mechanism.
\end{remark}

The next theorem records the random-cost form needed for block and coarse-graining
applications. It is a direct consequence of conditional integrability and the weighted
variable-cost theorem, but the proof is written out to make explicit where random repair
lengths enter the argument.

\begin{theorem}[Random-cost repair sufficient condition]\label{thm:random}
	Let $D = (T, F)$ be a random usable demand subnetwork with selected open repair routes and
	random costs $W_e \ge 0$. Suppose that, conditional on $D$, there is a unit flow $\theta$ on
	$D$ such that
	\begin{equation*}\label{the4:5:01}
		\sum_{e \in F} \mathbb{E}[W_e \mid D]\, \Phi(|\theta(e)|) < \infty.
	\end{equation*}
	Then
	\begin{equation}\label{the4:5:02}
		\sum_{e \in F} W_e\, \Phi(|\theta(e)|) < \infty \quad \text{almost surely},
	\end{equation}
	and the corresponding physical component supports a finite $\Phi$-energy lifted flow. In
	particular, if deterministic numbers $C_e$ satisfy
	\begin{equation*}
		\mathbb{E}[W_e \mid D] \le C_e, \qquad \sum_{e \in F} C_e\, \Phi(|\theta(e)|) < \infty,
	\end{equation*}
	then the conclusion~(\ref{the4:5:02}) holds.
\end{theorem}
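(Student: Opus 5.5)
The plan is to reduce the statement to Theorem~\ref{thm:weighted}, applied conditionally on $D$, after a conditional Markov/Tonelli argument that turns the integrability hypothesis into almost-sure finiteness.

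\textbf{Step 1 (conditional expectation of the total cost).} Work conditionally on $D$, which fixes the edge set $F$ and, through a measurable choice, the flow $\theta$. Put
\[
S=\sum_{e\in F} W_e\,\Phi(|\theta(e)|).
\]
All summands are nonnegative and each coefficient $\Phi(|\theta(e)|)$ is $D$-measurable. Conditional monotone convergence (equivalently, Tonelli for conditional expectations) therefore gives
\[
\mathbb{E}[S\mid D]=\sum_{e\in F}\mathbb{E}[W_e\mid D]\,\Phi(|\theta(e)|)<\infty
\]
almost surely.

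\textbf{Step 2 (from finite conditional mean to finite cost).} A nonnegative random variable whose conditional expectation is a.s.\ finite is itself a.s.\ finite. To see this, set $A_n=\{\mathbb{E}[S\mid D]\le n\}$. Then
\[
\mathbb{E}[S\mathbf 1_{A_n}]=\mathbb{E}\bigl[\mathbf 1_{A_n}\mathbb{E}[S\mid D]\bigr]\le n,
\]
so $S<\infty$ a.s.\ on $A_n$. Since $\bigcup_n A_n$ has full probability, this proves \eqref{the4:5:02}.

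\textbf{Step 3 (lifting the flow).} On the full-measure event from Step 2, fix the realized open routes. They form a deterministic repair scheme for $D$, and the hypothesis of Theorem~\ref{thm:weighted} holds with $W_e=W_e(\Phi,w)$, the cost of the selected route. Here I assume the random costs $W_e$ in the statement are, or dominate, the route costs $\sum_{a\in\gamma_e}w_a\beta_\Phi(m_a)$; this identification should be stated explicitly. Proposition~\ref{prop:conditional}, or directly Theorem~\ref{thm:weighted}, then shows that the lifted flow $\Theta$ is a unit flow on the open component of $\iota(o)$ with
\[
\sum_a w_a\,\Phi(|\Theta(a)|)\le S<\infty .
\]

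\textbf{Step 4 (the deterministic-bound case).} The final claim follows by monotonicity. If $\mathbb{E}[W_e\mid D]\le C_e$, then
\[
\sum_{e\in F}\mathbb{E}[W_e\mid D]\,\Phi(|\theta(e)|)\le \sum_{e\in F} C_e\,\Phi(|\theta(e)|)<\infty,
\]
so the main hypothesis holds and Steps 1--3 apply.

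\textbf{Main obstacle.} The analysis is routine; the delicate part is measurability. The flow $\theta$ has to be chosen as a measurable function of $D$, for example the $\Phi$-energy minimizer or a fixed measurable selection, so that the coefficients $\Phi(|\theta(e)|)$ can be pulled out of the conditional expectation. I would handle this by stating the hypothesis as ``there is a $\sigma(D)$-measurable unit flow $\theta$,'' which is how ``conditional on $D$'' should be read.
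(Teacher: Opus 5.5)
Your proposal is correct and follows the paper's proof essentially step for step: Tonelli for the conditional expectation of $\sum_{e\in F}W_e\,\Phi(|\theta(e)|)$, the fact that a nonnegative variable with a.s.\ finite conditional mean is a.s.\ finite (which you justify more carefully than the paper via the events $A_n$), an application of Theorem~\ref{thm:weighted} conditionally on $D$ and the selected routes, and monotonicity for the deterministic bounds $C_e$. You make two assumptions explicit that the paper uses only implicitly: that $W_e$ equals or dominates the route cost $W_e(\Phi,w)$, and that $\theta$ is chosen $\sigma(D)$-measurably.
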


\begin{proof}
	Set
	\begin{equation*}
		Z = \sum_{e \in F} W_e\, \Phi(|\theta(e)|).
	\end{equation*}
	All summands are nonnegative, hence Tonelli's theorem gives, conditionally on $D$,
	\begin{equation*}
		\mathbb{E}[Z \mid D] = \mathbb{E}\Bigl[ \sum_{e \in F} W_e\, \Phi(|\theta(e)|) \Bigm| D \Bigr]
		= \sum_{e \in F} \mathbb{E}[W_e \mid D]\, \Phi(|\theta(e)|) < \infty.
	\end{equation*}
	A nonnegative random variable with finite conditional expectation is finite almost surely;
	indeed $\mathbb{P}(Z = \infty \mid D) = 0$. Hence $Z < \infty$ almost surely. Condition on $D$
	and on the selected open routes. The selected routes define a deterministic variable-cost
	repair scheme to which Theorem~\ref{thm:weighted} applies with the realized costs $W_e$, so
	the lifted flow satisfies
	\begin{equation*}
		\mathcal{E}^\Phi_H(\Theta) \le \sum_{e \in F} W_e\, \Phi(|\theta(e)|) = Z < \infty.
	\end{equation*}
	If the deterministic upper bounds $C_e$ are available, then
	\begin{equation*}
		\sum_{e \in F} \mathbb{E}[W_e \mid D]\, \Phi(|\theta(e)|)
		\le \sum_{e \in F} C_e\, \Phi(|\theta(e)|) < \infty,
	\end{equation*}
	so the preceding argument applies. Chemical-distance estimates, such as those of Antal and
	Pisztora \cite{AP}, provide a standard way to verify such bounds in supercritical block
	constructions.
\end{proof}

For later reference we also recall the macroscopic transience input of Grimmett, Kesten and
Zhang \cite{GKZ}: if $d \ge 3$ and $r > p_c(\mathbb{Z}^d)$, then the infinite cluster of
Bernoulli-$r$ bond percolation on $\mathbb{Z}^d$ is transient almost surely on the event of its
existence. Equivalently, that cluster supports a finite quadratic-energy unit flow.

\section{Applications}\label{sec:app}

This section explains how the abstract repair theorems are used in concrete network models.
The point of the applications is not to introduce a new class of trees, but to show that the
repair results turns known macroscopic flow information into statements about locally
reinforced physical networks. We first record a general reinforcement principle, then give two
local repair mechanisms involving independent corridors and finite-dependent square bypasses, and
finally discuss lattice, wedge-type, coarse block and finite-scale resistance consequences.

\subsection{A general local-reinforcement principle}

The following statement is the basic way in which the paper should be used. A macroscopic
theorem supplies finite $\Phi$-energy flows on a random subnetwork of the demand graph. A
local repair mechanism then realizes the corresponding demand edges inside the physical graph. 

\begin{corollary}[Local reinforcement transfer]\label{cor:transfer}
	Let $\mathcal{B}$ be a bounded-degree demand network for which $(\mathcal{B}, \Phi, r)$ has
	macroscopic gauge-energy stability. Suppose that each demand edge is equipped with a
	bounded-range local repair module. Let $Y_e$ be the event that demand edge $e$ is repairable
	after Bernoulli failures in the physical network. Assume that:
	\begin{enumerate}
		\item[(i)] the field $(Y_e)$ is $R$-dependent;
		\item[(ii)] $\inf_e \mathbb{P}(Y_e = 1) \ge \rho(R, r)$, where $\rho(R, r)$ is the domination
		threshold in Theorem~\ref{thm:fd};
		\item[(iii)] selected open repair routes can be chosen with length at most $L$ and congestion
		at most $K$.
	\end{enumerate}
	Then the surviving physical network supports a finite $\Phi$-energy unit flow with positive
	probability. Moreover, if $\theta$ is the macroscopic flow on the dominated Bernoulli-$r$
	demand component and $\Theta$ is its lift, then
	\begin{equation*}
		\mathcal{E}^\Phi_H(\Theta) \le L\, \beta_\Phi(K)\, \mathcal{E}^\Phi_D(\theta).
	\end{equation*}
\end{corollary}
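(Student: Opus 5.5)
The plan is to reduce Corollary~\ref{cor:transfer} directly to Theorem~\ref{thm:fd}, since its hypotheses are essentially the same in different language. First, I will observe that the local repair modules, together with Bernoulli failures of the physical edges, define a bounded-range local repair model over $\mathcal{B}$. The indicators $Y_e$ of the repairability events form its repairability field. Hypotheses (i) and (ii) of the corollary are then literally hypotheses (i) and (ii) of Theorem~\ref{thm:fd}, with the same threshold $\rho(R,r)$ supplied by the domination input~\eqref{eq:domination}. Hypothesis (iii) needs a small clarification: the bounds $L$ and $K$ must hold simultaneously for the whole family of selected routes over the subnetwork used, not route by route. I will read (iii) in that uniform sense, which is exactly condition (iii) of Theorem~\ref{thm:fd}.

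Next I will invoke Theorem~\ref{thm:fd}. Through the Liggett--Schonmann--Stacey coupling it yields a Bernoulli-$r$ field $\omega\le Y$. By macroscopic gauge-energy stability, with positive probability the root component $D$ of $\{\omega_e=1\}$ carries a unit flow $\theta$ with $\mathcal{E}^\Phi_D(\theta)<\infty$. On that event every $e\in F_D$ has $Y_e=1$, so it has an open route. Choosing these routes according to (iii) gives an $(L,K)$-controlled scheme inside the open physical network $H[p]$.

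Theorem~\ref{thm:main}, applied to $D$ in place of $\mathcal{B}^{\sharp}$ with $H[p]$ as the physical network, then produces the lift $\Theta$. This $\Theta$ is a unit flow from $\iota(o)$ to infinity on the open cluster of $\iota(o)$, and it satisfies $\mathcal{E}^\Phi_H(\Theta)\le L\,\beta_\Phi(K)\,\mathcal{E}^\Phi_D(\theta)<\infty$. This is both the positive-probability statement and the displayed inequality.

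The only step needing care is the finiteness of $\beta_\Phi(K)$. Theorem~\ref{thm:main} assumes $\alpha_\Phi(m)<\infty$ for $m\le K$, and the corollary does not state this explicitly. I will either take it as implicit in the paper's standing convention (``the arguments only require these quantities to be finite'') or note that the conclusion is vacuous otherwise. For power gauges it holds automatically by~\eqref{eq:power}.

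A second minor point is measurability of the route selection. Since conditioning on the configuration makes the selection deterministic, as in the proof of Proposition~\ref{prop:conditional}, no probabilistic issue arises. I therefore expect no genuine obstacle: the corollary is a restatement of Theorem~\ref{thm:fd} in the language of local reinforcement modules.
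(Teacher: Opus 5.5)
Your proposal is correct and is essentially the paper's own proof: you match the hypotheses to those of Theorem~\ref{thm:fd}, couple $\omega\le Y$ via Liggett--Schonmann--Stacey, take $\theta$ on the root component of $D$ from macroscopic stability, choose an $(L,K)$-controlled open route family by (iii), and lift with Theorem~\ref{thm:main}. One correction to your fallback remark: if $\alpha_\Phi(m)=\infty$ for some $m\le K$, only the displayed inequality becomes trivial, while the argument no longer produces a finite $\Phi$-energy physical flow, so the conclusion is not vacuous and finiteness of $\beta_\Phi(K)$ must be read as an implicit standing hypothesis, as the paper tacitly does.
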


\begin{proof}
	The assumptions are exactly those of Theorem~\ref{thm:fd}. The domination step produces a
	coupling under which the repairability field $(Y_e)$ dominates an independent Bernoulli-$r$
	field $(\omega_e)$ with $\omega_e \le Y_e$ for all $e$, and $D = (T, F_D)$ with
	$F_D = \{e : \omega_e = 1\}$ is the dominated usable demand subnetwork. Macroscopic
	gauge-energy stability gives, with positive probability, a unit flow $\theta$ on the component
	of $o$ in $D$ with $\mathcal{E}^\Phi_D(\theta) < \infty$. On that event, every edge of $F_D$ is
	repairable, and by (iii) the selected open repair routes form an $(L,K)$-controlled repair
	scheme for $D$ in the open physical network. Theorem~\ref{thm:main} gives the displayed bound.
\end{proof}

This corollary is useful because all model-specific work is pushed into three checkable
quantities: the repair probability, the dependence range and the congestion of selected repair
routes. The next two subsections make these quantities explicit in representative local
modules.

\subsection{Independent corridor reinforcement}

The simplest repair module replaces each demand edge by several short independent physical
corridors. This case is useful as a benchmark because it gives an explicit reliability
threshold and does not require stochastic domination for dependent fields.

Fix a demand network $\mathcal{B} = (T, E_{\mathcal{B}})$. For each demand edge
$e = \{u, v\}$, add $M$ internally edge-disjoint physical paths of length $\ell$ between
$\iota(u)$ and $\iota(v)$. Assume that the physical edges used for different demand edges are
disjoint. Under Bernoulli-$p$ failures, demand edge $e$ is repairable if at least one of the
$M$ length-$\ell$ corridors is open. Hence
\begin{equation}\label{eq:corridor}
	q_{M,\ell}(p) = \mathbb{P}(Y_e = 1) = 1 - (1 - p^{\ell})^{M}.
\end{equation}
The indicators $(Y_e)$ are independent, and the usable demand network is exactly
Bernoulli-$q_{M,\ell}(p)$ percolation on $\mathcal{B}$.

\begin{proposition}[Independent corridor transfer]\label{prop:corridor}
	Let $\mathcal{B}$ be a demand network for which $(\mathcal{B}, \Phi, r)$ has macroscopic
	gauge-energy stability. In the independent corridor reinforcement above, if
	\begin{equation*}
		q_{M,\ell}(p) = 1 - (1 - p^{\ell})^{M} \ge r,
	\end{equation*}
	then the surviving physical network supports a finite $\Phi$-energy unit flow with positive
	probability. If one open corridor is selected for every usable demand edge, then the lifted
	flow satisfies
	\begin{equation*}
		\mathcal{E}^\Phi_H(\Theta) \le \ell\, \beta_\Phi(K_0)\, \mathcal{E}^\Phi_D(\theta),
	\end{equation*}
	where $K_0$ is the deterministic corridor overlap bound. In the edge-disjoint corridor
	construction $K_0 = 1$, so
	\begin{equation*}
		\mathcal{E}^\Phi_H(\Theta) \le \ell\, \mathcal{E}^\Phi_D(\theta).
	\end{equation*}
	Equivalently, for a target macroscopic parameter $r$ it is enough that
	\begin{equation}\label{eq:threshold}
		p \ge \bigl[ 1 - (1 - r)^{1/M} \bigr]^{1/\ell}.
	\end{equation}
\end{proposition}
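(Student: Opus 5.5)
The plan is to reduce the statement to Theorem~\ref{thm:main} through a monotone coupling, without any appeal to the domination input $\rho(R,r)$, because the repairability field is already independent. The first step is to record that $Y_e=1$ exactly when one of the $M$ corridors of $e$ is fully open. The corridors of a fixed demand edge are internally edge-disjoint, and the physical edges of distinct demand edges are disjoint. Hence the $Y_e$ are independent Bernoulli variables with parameter $q_{M,\ell}(p)$ given by \eqref{eq:corridor}. This shows that the usable demand network $\mathcal{B}^\sharp=(T,\{e:Y_e=1\})$ is exactly Bernoulli-$q_{M,\ell}(p)$ bond percolation on $\mathcal{B}$.

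Next, given $q_{M,\ell}(p)\ge r$, I will use the standard monotone coupling. Take i.i.d.\ uniforms $U_e$, realize $Y_e=\mathbf 1\{U_e\le q_{M,\ell}(p)\}$, and set $\omega_e=\mathbf 1\{U_e\le r\}$. This gives $\omega_e\le Y_e$, and $\omega$ is Bernoulli-$r$ percolation on $\mathcal B$. Conditionally on each $Y_e$, the corridor configuration can be resampled so that the coupling is a genuine joint law of physical failures and $\omega$; I need this so that the physical network is really Bernoulli-$p$. Let $D=(T,F_D)$ with $F_D=\{e:\omega_e=1\}$. Macroscopic gauge-energy stability (Definition~\ref{def:stability}) gives, with positive probability, a unit flow $\theta$ on the root component of $D$ with $\mathcal{E}^\Phi_D(\theta)<\infty$.

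On this event, every $e\in F_D$ has an open corridor, and I select one, say $\gamma_e$, by a fixed deterministic rule. Each selected route has length exactly $\ell$. Since the corridors are edge-disjoint, the congestion is at most the overlap bound $K_0$, and $K_0=1$ in the edge-disjoint construction. The scheme is therefore $(\ell,K_0)$-controlled inside the open physical network. Theorem~\ref{thm:main} then yields a unit flow $\Theta$ on the open cluster of $\iota(o)$ with $\mathcal{E}^\Phi_H(\Theta)\le \ell\,\beta_\Phi(K_0)\,\mathcal{E}^\Phi_D(\theta)$. When $K_0=1$ we have $\beta_\Phi(1)=\alpha_\Phi(1)=1$, which gives the bound with factor $\ell$ alone. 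Finiteness of $\alpha_\Phi(K_0)$ needs a remark: it holds automatically for $K_0=1$, and for general $K_0$ it is part of the hypothesis of Theorem~\ref{thm:main}.

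Finally, the threshold \eqref{eq:threshold} is an algebraic rearrangement. The map $p\mapsto 1-(1-p^\ell)^M$ is increasing, and $q_{M,\ell}(p)\ge r$ is equivalent to $(1-p^\ell)^M\le 1-r$, hence to $p^\ell\ge 1-(1-r)^{1/M}$. The only point requiring care is the coupling step: the physical configuration must remain Bernoulli-$p$ while $\omega\le Y$. The conditional resampling described above handles this. Everything else is immediate from Theorem~\ref{thm:main}.
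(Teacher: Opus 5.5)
Your proposal is correct and follows the paper's proof essentially step for step. Independence of the $Y_e$ makes the usable network Bernoulli-$q_{M,\ell}(p)$ percolation, which dominates Bernoulli-$r$ when $q_{M,\ell}(p)\ge r$; one open corridor per edge then gives an $(\ell,K_0)$-controlled scheme for Theorem~\ref{thm:main}, and \eqref{eq:threshold} is the algebraic inversion. Your explicit uniform coupling with conditional resampling and your remark on finiteness of $\alpha_\Phi(K_0)$ only make precise what the paper leaves implicit. A thinning $\omega_e=Y_eB_e$ with independent Bernoulli-$(r/q_{M,\ell}(p))$ coins would serve equally well and leaves the physical configuration untouched.
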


\begin{proof}
	For a fixed demand edge $e$, a given corridor is open with probability $p^{\ell}$, and the $M$
	corridors are internally edge-disjoint, hence independently open. The event that all $M$
	corridors fail is therefore $(1 - p^{\ell})^{M}$, which gives \eqref{eq:corridor}. Since the
	physical edges used for different demand edges are disjoint, the indicators $(Y_e)$ are
	independent, and the usable demand network is exactly Bernoulli-$q_{M,\ell}(p)$ percolation on
	$\mathcal{B}$; in particular it stochastically dominates Bernoulli-$r$ percolation whenever
	$q_{M,\ell}(p) \ge r$. On the event that the dominated Bernoulli-$r$ demand component carries
	a finite $\Phi$-energy flow $\theta$, choose for each edge used by $\theta$ one open corridor.
	The route length is $\ell$ and the overlap is bounded by $K_0$, so Theorem~\ref{thm:main}
	gives
	\begin{equation*}
		\mathcal{E}^\Phi_H(\Theta) \le \ell\, \beta_\Phi(K_0)\, \mathcal{E}^\Phi_D(\theta).
	\end{equation*}
	Solving $1 - (1 - p^{\ell})^{M} \ge r$ for $p$ gives \eqref{eq:threshold}.
\end{proof}

This proposition gives a direct reliability interpretation. Increasing the number of local
corridors lowers the microscopic failure tolerance needed to realize the same macroscopic
percolation parameter. For fixed $r$ and $\ell$,
\begin{equation*}
	\bigl[ 1 - (1 - r)^{1/M} \bigr]^{1/\ell} \asymp \Bigl( \frac{-\log(1 - r)}{M} \Bigr)^{1/\ell}
	\qquad (M \to \infty),
\end{equation*}
since $(1 - r)^{1/M} = \exp(M^{-1} \log(1 - r)) = 1 - M^{-1}|\log(1 - r)| + O(M^{-2})$. Thus
local parallel redundancy improves the admissible microscopic threshold at polynomial order
$M^{-1/\ell}$.

\subsection{A finite-dependent square-bypass module}

Independent corridors are mathematically transparent, but they are too idealized for many
repair networks. Local bypasses usually share physical edges with nearby modules. The
square-bypass construction below is a concrete finite-dependent model showing why
Theorem~\ref{thm:fd} is needed.

Let the demand network be $\mathcal{B} = \mathbb{Z}^d$ with nearest-neighbour edges. Embed the
terminals in a physical copy of $\mathbb{Z}^d$. For a demand edge $e = \{x, x + e_i\}$, allow the
following candidate routes:
\begin{enumerate}
	\item[(a)] the direct edge $\{x, x + e_i\}$;
	\item[(b)] for each $j \ne i$ and $\sigma \in \{-1, 1\}$, the length-three square bypass
	\begin{equation*}
		x \to x + \sigma e_j \to x + e_i + \sigma e_j \to x + e_i .
	\end{equation*}
\end{enumerate}

There are $2(d-1)$ square bypasses. For a fixed demand edge, the direct edge and these bypasses
are edge-disjoint. Therefore
\begin{equation}\label{eq:sq}
	\pi_{\mathrm{sq}}(p, d) = 1 - (1 - p)(1 - p^3)^{2(d-1)}.
\end{equation}

The field $(Y_e)$ is not independent, since nearby demand edges may use common physical edges.
It is, however, finite-dependent. Indeed, $Y_e$ is a function only of physical edges in the
finite union of unit squares adjacent to $e$. Hence there exists a deterministic number
$R_d < \infty$ such that $Y_e$ and $Y_f$ are independent whenever the edge-distance between
$e$ and $f$ in the demand lattice exceeds $R_d$.

We next verify the route-length and congestion bounds.
Define
\begin{equation*}
	K_d = \sup_{a \in E_H} \#\{e \in E_{\mathcal{B}} : a \text{ belongs to at least one candidate
		route for } e\}.
\end{equation*}
Since every candidate route remains in a bounded neighbourhood of its demand edge,
$K_d < \infty$. Any selection of one route for each repairable demand edge then has congestion
at most $K_d$ and length at most $3$.

The preceding observations verify the hypotheses of Theorem~\ref{thm:fd}. We therefore obtain the following proposition.

\begin{proposition}[Square-bypass lattice criterion]\label{prop:sq}
	Let $d \ge 3$, let $\mathcal{B} = \mathbb{Z}^d$, and consider the square-bypass repair model under
	Bernoulli-$p$ physical edge failures. Fix a gauge $\Phi$ and a parameter $r$ such that
	$(\mathbb{Z}^d, \Phi, r)$ has macroscopic gauge-energy stability. If
	\begin{equation}\label{eq:sqcond}
		\pi_{\mathrm{sq}}(p, d) = 1 - (1 - p)(1 - p^3)^{2(d-1)} \ge \rho(R_d, r),
	\end{equation}
	then the surviving physical network supports a finite $\Phi$-energy unit flow with positive
	probability. The lifted flow may be chosen to satisfy
	\begin{equation*}
		\mathcal{E}^\Phi_H(\Theta) \le 3\, \beta_\Phi(K_d)\, \mathcal{E}^\Phi_D(\theta).
	\end{equation*}
	In particular, for $\Phi(t) = t^2$, the surviving physical network contains a transient
	component with positive probability whenever $r > p_c(\mathbb{Z}^d)$ and \eqref{eq:sqcond} holds.
\end{proposition}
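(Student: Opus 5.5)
The plan is to reduce Proposition~\ref{prop:sq} to Theorem~\ref{thm:fd} (equivalently Corollary~\ref{cor:transfer}) with $\mathcal{B}=\mathbb{Z}^d$, $R=R_d$, $L=3$, $K=K_d$. Three hypotheses need checking, and most of the work was done in the discussion before the statement. This proof only has to make each verification precise.

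\emph{Marginals.} For a demand edge $e=\{x,x+e_i\}$, the direct edge and the $2(d-1)$ square bypasses are pairwise edge-disjoint. The bypass indexed by $(j,\sigma)$ uses the edges $\{x,x+\sigma e_j\}$, $\{x+\sigma e_j,x+e_i+\sigma e_j\}$ and $\{x+e_i+\sigma e_j,x+e_i\}$, and these are distinct for distinct $(j,\sigma)$ and distinct from $\{x,x+e_i\}$. Under Bernoulli-$p$ failures the candidate routes are therefore open independently, with probabilities $p$ and $p^3$. Hence $\mathbb{P}(Y_e=1)=\pi_{\mathrm{sq}}(p,d)$ for every $e$, by translation and rotation invariance. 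Hypothesis~(ii) is then exactly \eqref{eq:sqcond}.

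\emph{Finite dependence.} $Y_e$ is measurable with respect to the physical edges within $\ell^\infty$-distance $1$ of $e$. If two demand edges are far enough apart that these edge sets are disjoint, then $Y_e$ and $Y_f$ depend on disjoint families of independent variables. More generally, families of $Y$'s indexed by sets at distance $>R_d$ are independent, which is the notion of $R$-dependence required by the Liggett--Schonmann--Stacey input. One can take $R_d$ to be a small absolute constant, e.g.\ $R_d=4$ in graph distance on edges.

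\emph{$(L,K)$ control.} Every selected route has length $1$ or $3$, so $L=3$. A physical edge $a$ lies on a candidate route of $e$ only if $e$ lies within distance $2$ of $a$. Only boundedly many demand edges satisfy this (at most of order $d^2$), so $K_d<\infty$. Since a selection uses at most one route per demand edge, its congestion at $a$ is at most $K_d$. For the gauge we need $\alpha_\Phi(m)<\infty$ for $m\le K_d$. This is implicit in the bound being meaningful, and I would add it as a standing assumption if it is not automatic (for power gauges it is).

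With these in hand, Theorem~\ref{thm:fd} gives, with positive probability, a component of the open physical network carrying the lift $\Theta$ with $\mathcal{E}^\Phi_H(\Theta)\le 3\beta_\Phi(K_d)\mathcal{E}^\Phi_D(\theta)<\infty$. One caveat needs care: the selected routes must lie in $H[p]$, so the ``physical network'' in Theorem~\ref{thm:main} is the open subgraph. The lift is then supported in the open cluster of $\iota(o)$.

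For the quadratic statement, we need macroscopic stability of $(\mathbb{Z}^d,t^2,r)$ for $r>p_c(\mathbb{Z}^d)$ and $d\ge3$. For such $r$, the root lies in the infinite cluster with positive probability $\vartheta(r)>0$, and by Grimmett--Kesten--Zhang that cluster is transient a.s. It therefore supports a finite-energy unit flow, and Thomson's principle (Corollary~\ref{cor:transience}) gives transience of the physical component.

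The main obstacle is the precise dependence notion. The LSS theorem requires independence of \emph{families} at separation $>R$, not just pairwise independence as stated before the proposition. I will argue it directly from the fact that each $Y_e$ is a function of a local edge set, so disjointness of the unions of these local sets for separated families yields independence. I will also confirm that $\mathbb{Z}^d$ lies in the bounded-degree class for which $\rho(R,r)<1$ was asserted.
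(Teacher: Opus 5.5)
Your proposal is correct and follows essentially the same route as the paper. Both arguments compute $\pi_{\mathrm{sq}}(p,d)$ from edge-disjointness of the direct edge and the $2(d-1)$ bypasses, use $R_d$-dependence and the LSS domination to couple with a Bernoulli-$r$ field, select open routes in the open subgraph with length at most $3$ and congestion at most $K_d$, apply Theorem~\ref{thm:main}, and invoke Grimmett--Kesten--Zhang for the quadratic case. Your extra checks are worth keeping: independence of separated \emph{families} rather than pairs, and the standing assumption $\alpha_\Phi(m)<\infty$ for $m\le K_d$ required by Theorem~\ref{thm:main}, both of which the paper leaves implicit.
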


\begin{proof}
	For a fixed demand edge $e$, the direct edge is open with probability $p$ and each of the
	$2(d-1)$ square bypasses is open with probability $p^3$; these $1 + 2(d-1)$ routes are pairwise
	edge-disjoint, so the repair probability is
	\begin{equation*}
		\mathbb{P}(Y_e = 1) = 1 - (1 - p)(1 - p^3)^{2(d-1)} = \pi_{\mathrm{sq}}(p, d),
	\end{equation*}
	which is \eqref{eq:sq}. The field $(Y_e)$ is $R_d$-dependent by the argument above, and the
	marginal condition \eqref{eq:sqcond} permits the application of the Liggett--Schonmann--Stacey(LSS) domination theorem~\cite{LSS}
	with target parameter $r$: there is a coupling of $(Y_e)$ with an independent Bernoulli-$r$
	field $(\omega_e)$ on $E(\mathbb{Z}^d)$ under which
	\begin{equation*}
		\omega_e \le Y_e \quad (e \in E(\mathbb{Z}^d)).
	\end{equation*}
	By macroscopic gauge-energy stability, with positive probability the open demand component of
	$(\omega_e)$ contains a unit flow $\theta$ with finite $\Phi$-energy. Since every open
	$\omega$-edge is repairable, choose one open direct or bypass route for each edge used by
	$\theta$. The route length is at most $3$ and the congestion is at most $K_d$.
	Theorem~\ref{thm:main} yields the displayed inequality. For $\Phi(t) = t^2$, macroscopic
	stability follows from the transience theorem of Grimmett, Kesten and Zhang \cite{GKZ} for
	supercritical percolation on $\mathbb{Z}^d$, $d \ge 3$.
\end{proof}

\begin{remark}\label{rem:sq}
	The condition \eqref{eq:sqcond} is implicit because the LSS threshold is generally not
	explicit. The local part is explicit: as $p \uparrow 1$,
	\begin{equation*}
		\pi_{\mathrm{sq}}(p, d) \uparrow 1,
	\end{equation*}
	so for every fixed $r \in (0,1)$ and dependence range $R_d$, the condition holds for all $p$
	sufficiently close to one.
\end{remark}

\subsection{Gauge-energy inputs on lattices}

The preceding propositions require macroscopic gauge-energy stability. For lattices this
requirement is natural and can be seen already at the deterministic level. For the power gauge
$\Phi(t) = t^q$, a radial unit flow on $\mathbb{Z}^d$ has finite $q$-energy precisely in the classical
range
\begin{equation}\label{eq:radial}
	q > \frac{d}{d - 1}.
\end{equation}
Indeed, send approximately equal current through the edges crossing the sphere of radius $n$:
the $n$-th shell contains order $n^{d-1}$ edges, and a typical edge crossing that shell
carries current of order $n^{1-d}$. The contribution of the $n$-th shell to the $q$-energy is
therefore of order
\begin{equation*}
	n^{d-1} \bigl( n^{1-d} \bigr)^{q} = n^{(d-1)(1-q)},
\end{equation*}
and the series
\begin{equation*}
	\sum_{n \ge 1} n^{(d-1)(1-q)}
\end{equation*}
converges exactly when $(d-1)(q-1) > 1$, that is, $q > d/(d-1)$. Hoffman--Mossel type results
\cite{HM} show that such finite gauge-energy inputs persist on supercritical percolation
clusters of $\mathbb{Z}^d$ in the corresponding setting. In fact, by the sharp results of Levin and
Peres \cite{LP2}, for $d \ge 3$ the supercritical cluster of $\mathbb{Z}^d$ supports a nonzero flow
of finite $q$-energy precisely in the same range \eqref{eq:radial}, so the deterministic
threshold is not an artifact of the radial construction. In two dimensions the deterministic
threshold is $q > 2$, and this is also the sharp percolation threshold: Hoffman \cite{Hoffman}
showed that for $p > p_c(\mathbb{Z}^2)$ the infinite cluster of $\mathbb{Z}^2$ supports a nonzero flow of
finite $q$-energy for every $q > 2$, and no such flow exists for $q \le 2$ since $\mathbb{Z}^2$
itself admits no nonzero flow of finite $q$-energy in that range.

\begin{corollary}[Reinforced lattice gauge-energy criterion]\label{cor:lattice}
	Let $d \ge 3$ and $q > d/(d-1)$. Suppose that Bernoulli-$r$ percolation on $\mathbb{Z}^d$ has, with
	positive probability, a root component carrying a finite $q$-energy unit flow. In the
	square-bypass repair model, if
	\begin{equation*}
		1 - (1 - p)(1 - p^3)^{2(d-1)} \ge \rho(R_d, r),
	\end{equation*}
	then the surviving locally repaired physical network supports a finite $q$-energy unit flow
	with positive probability. For the lifted flow,
	\begin{equation*}
		\mathcal{E}^q_H(\Theta) \le 3\, K_d^{q-1}\, \mathcal{E}^q_D(\theta).
	\end{equation*}
\end{corollary}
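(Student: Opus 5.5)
This corollary is a specialization of Proposition~\ref{prop:sq} to the power gauge $\Phi(t)=t^q$. The proof reduces to three checks: the macroscopic hypothesis has the required form, the constant specializes correctly, and the local repair condition is the same one used in Proposition~\ref{prop:sq}.

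\textbf{Macroscopic hypothesis.} Set $\Phi(t)=t^q$ with $q>d/(d-1)>1$. This $\Phi$ is increasing, convex and satisfies $\Phi(0)=0$, so it is a gauge in the sense of Definition~\ref{def:gauge}. The hypothesis says that Bernoulli-$r$ percolation on $\mathbb{Z}^d$ has, with positive probability, a root component carrying a finite $q$-energy unit flow. This is word for word the statement that $(\mathbb{Z}^d,\Phi,r)$ has macroscopic gauge-energy stability in the sense of Definition~\ref{def:stability}.

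The range $q>d/(d-1)$ plays no role in the logic here. It only marks, by \eqref{eq:radial} and the Levin--Peres discussion, the regime in which this hypothesis is known to be satisfiable for supercritical $r$. I will note this but will not use it.

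\textbf{Finiteness of the dilation constants.} From \eqref{eq:power}, $\alpha_\Phi(m)=m^q<\infty$ for every $m$. Hence the requirement of Theorem~\ref{thm:main} holds for any congestion bound, in particular $K=K_d$. Moreover $\beta_\Phi(K_d)=\max_{m\le K_d} m^{q-1}=K_d^{q-1}$, because $m\mapsto m^{q-1}$ is increasing for $q>1$.

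\textbf{Invoking Proposition~\ref{prop:sq}.} The displayed inequality $1-(1-p)(1-p^3)^{2(d-1)}\ge\rho(R_d,r)$ is exactly condition \eqref{eq:sqcond}. The remaining ingredients have already been verified before Proposition~\ref{prop:sq}:
\begin{itemize}
\item the dependence range $R_d$ of the repairability field,
\item the route-length bound $3$,
\item the congestion bound $K_d$.
\end{itemize}
Proposition~\ref{prop:sq} (equivalently Theorem~\ref{thm:fd}) then gives, with positive probability, a lifted unit flow $\Theta$ on the surviving physical network satisfying
\[
\mathcal{E}^q_H(\Theta)\le 3\,\beta_\Phi(K_d)\,\mathcal{E}^q_D(\theta).
\]
Substituting $\beta_\Phi(K_d)=K_d^{q-1}$ gives the stated bound. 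Since the right-hand side is finite, $\Theta$ has finite $q$-energy.

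\textbf{Main obstacle.} There is no real obstacle. The only point needing care is that $\rho(R_d,r)<1$, so that the condition is nonvacuous. For the hypothesis to hold one needs $r<1$ or, if $r=1$, the trivial case. The paper's domination input, via the Liggett--Schonmann--Stacey theorem, supplies $\rho(R_d,r)<1$ for $r\in(0,1)$. Then Remark~\ref{rem:sq} shows the condition holds for all $p$ sufficiently close to $1$.
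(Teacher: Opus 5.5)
Your proposal is correct and matches the paper's proof: both treat the corollary as Proposition~\ref{prop:sq} with $\Phi(t)=t^q$, and both use \eqref{eq:power} to replace $\beta_\Phi(K_d)$ by $K_d^{q-1}$. Your extra checks are sound bookkeeping that the paper leaves implicit: that the hypothesis is exactly macroscopic gauge-energy stability, and that $\alpha_\Phi(m)=m^q<\infty$.
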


\begin{proof}
	This is Proposition~\ref{prop:sq} with $\Phi(t) = t^q$. For this gauge,
	\eqref{eq:power} gives $\beta_\Phi(K_d) = K_d^{q-1}$, and the bound of
	Proposition~\ref{prop:sq} becomes
	\begin{equation*}
		\mathcal{E}^q_H(\Theta) \le 3\, \beta_\Phi(K_d)\, \mathcal{E}^q_D(\theta)
		= 3\, K_d^{q-1}\, \mathcal{E}^q_D(\theta).
	\end{equation*}
\end{proof}

\subsection{Wedge-type demand networks}

Wedge-type networks are important because their transience and gauge-energy behaviour is not
merely a restatement of the lattice case. The repair framework can be applied to any wedge
demand network for which an external theorem gives a macroscopic finite-energy input. This
includes the transience and convex-gauge results proved for suitable wedges by Angel,
Benjamini, Berger and Peres \cite{ABBP}.
Combining these wedge results with the repair framework gives the following corollary.

\begin{corollary}[Locally reinforced wedge-type networks]\label{cor:wedge}
	Let $W$ be a wedge-type demand network satisfying macroscopic gauge-energy stability for
	$(\Phi, r)$. Suppose a bounded-range local repair model over $W$ has repairability field
	$(Y_e)$ that is $R$-dependent, has
	\begin{equation*}
		\inf_{e} \mathbb{P}(Y_e = 1) \ge \rho(R, r),
	\end{equation*}
	and admits selected open routes of length at most $L$ and congestion at most $K$. Then the
	surviving physical network supports a finite $\Phi$-energy unit flow with positive
	probability. The lifted flow satisfies
	\begin{equation*}
		\mathcal{E}^\Phi_H(\Theta) \le L\, \beta_\Phi(K)\, \mathcal{E}^\Phi_D(\theta).
	\end{equation*}
\end{corollary}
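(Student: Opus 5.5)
The plan is to obtain Corollary~\ref{cor:wedge} as a direct specialization of Corollary~\ref{cor:transfer} (equivalently Theorem~\ref{thm:fd}), with the demand network $\mathcal{B}=W$. Nothing about the geometry of the wedge enters beyond the assumed macroscopic gauge-energy stability of $(W,\Phi,r)$. So the work is to check that each hypothesis of Corollary~\ref{cor:transfer} is matched by a hypothesis of the present statement, and then to read off the energy bound.

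First I will check the standing assumptions on $W$. A wedge-type network is a subgraph of a lattice, so it is infinite, connected and of bounded degree; it therefore qualifies as a demand network in the sense of Definition~\ref{def:networks}. The domination threshold $\rho(R,r)$ of \eqref{eq:domination} is available on this bounded-degree class. Stability for $(\Phi,r)$ is assumed directly, which supplies the macroscopic input.

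Next I will match hypotheses (i)--(iii) of Corollary~\ref{cor:transfer}:
\begin{itemize}
\item $R$-dependence of $(Y_e)$ is assumed;
\item the marginal bound $\inf_e\mathbb{P}(Y_e=1)\ge\rho(R,r)$ is assumed;
\item the existence of selected open routes of length at most $L$ and congestion at most $K$ is exactly (iii).
\end{itemize}

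I will then run the argument of Theorem~\ref{thm:fd}. The LSS coupling gives $\omega_e\le Y_e$ with $(\omega_e)$ independent Bernoulli-$r$ on $E_W$. Let $D$ be the $\omega$-open subnetwork. With positive probability, the root component of $D$ carries a unit flow $\theta$ with $\mathcal{E}^\Phi_D(\theta)<\infty$. On that event every edge of $D$ is repairable, and the selected routes form an $(L,K)$-controlled scheme inside the open physical network. Theorem~\ref{thm:main} then yields the lifted $\Theta$ with $\mathcal{E}^\Phi_H(\Theta)\le L\,\beta_\Phi(K)\,\mathcal{E}^\Phi_D(\theta)$. Finiteness of this bound requires $\alpha_\Phi(m)<\infty$ for $m\le K$. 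I will state this as implicit in the gauge assumption, since it holds automatically for power gauges.

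The only point needing care is hypothesis (iii). The congestion bound must hold for the route family restricted to the random subgraph $D$, not merely for some fixed family. This is fine because congestion can only decrease when routes are dropped: if every selected family over $E_W$ has congestion at most $K$, so does its restriction to $F_D$, and route lengths are unaffected. With that observation the corollary follows.
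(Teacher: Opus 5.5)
Your proposal is correct and matches the paper's proof: LSS domination giving $\omega_e\le Y_e$, macroscopic stability applied to the dominated Bernoulli-$r$ subnetwork $D$, repairability of every $\omega$-open edge, and Theorem~\ref{thm:main} applied to the selected $(L,K)$-controlled open routes. Your two side remarks are sound refinements that the paper leaves implicit, with two small corrections: the route family should be taken over the repairable edges $\{e: Y_e=1\}$ before restricting to $F_D$, since edges with $Y_e=0$ have no open route, and $\alpha_\Phi(m)<\infty$ for $m\le K$ is a genuine hypothesis carried over from Theorem~\ref{thm:main} rather than a consequence of the gauge definition (e.g.\ $\Phi(t)=e^t-1$ has $\alpha_\Phi(2)=\infty$).
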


\begin{proof}
	By the finite-dependent domination input, the repairability field $(Y_e)$ dominates an
	independent Bernoulli-$r$ field $(\omega_e)$ on the edge set of $W$. On the event supplied by
	macroscopic gauge-energy stability, the component of $o$ in the dominated demand subnetwork
	$D$ carries a unit flow $\theta$ with $\mathcal{E}^\Phi_D(\theta) < \infty$. Since $\omega_e \le Y_e$,
	every demand edge used by $\theta$ is repairable; choosing open repair routes of length at
	most $L$ and congestion at most $K$, and applying Theorem~\ref{thm:main}, gives the stated
	bound.
\end{proof}

This corollary is intentionally phrased as a transfer theorem. The wedge theorem supplies the
macroscopic input; the present paper supplies the local repair step. In this way a
gauge-energy result on a wedge can be converted into a gauge-energy result for a locally
reinforced physical wedge network without reproving the wedge theorem itself.

\subsection{Coarse block repairs and random route costs}\label{sec5:6}

Bounded local repairs are not the only natural model. In block constructions, a demand edge
may be realized by an open path through a random supercritical region, and the repair length
is random. The variable-cost theorem was introduced precisely for this situation.

Consider a coarse demand network whose vertices correspond to large blocks in a physical
percolation configuration. A coarse demand edge is declared usable if the corresponding
neighbouring blocks are connected by an open path inside a prescribed enlarged box. For
supercritical Bernoulli percolation on $\mathbb{Z}^d$, chemical-distance estimates imply that such
connecting paths have length of order the block size with exponentially small upper-tail
probability, conditional on the relevant blocks being good. The chemical-distance bounds of Antal and Pisztora \cite{AP}, in the large-deviation
form of Garet and Marchand \cite{GaretMarchand2007}, give
\[
\mathbb{E}[W_e \mid D] \leq C_N,
\]
where \(C_N\) depends on the block scale and the percolation parameter. These
bounds imply that, for all large block scales, the connecting paths between
adjacent blocks have length of order the block size with exponentially small
upper-tail probability. Consequently, the conditional cost bound holds with
\(C_N\) of order \(N\). We treat this bound as a hypothesis of the repair
theorem below, rather than proving it as a separate lemma.

\begin{proposition}[Fixed-scale block repair]\label{prop:block}
	Let $D = (T, F)$ be a usable coarse demand network obtained from a block construction.
	Suppose each selected open repair route has random cost $W_e$ and that, conditional on $D$,
	there is a unit flow $\theta$ on $D$ such that
	\begin{equation*}
		\sum_{e \in F} \mathbb{E}[W_e \mid D]\, \Phi(|\theta(e)|) < \infty.
	\end{equation*}
	Then the surviving physical network supports a finite $\Phi$-energy lifted flow. In
	particular, if for a fixed block scale $N$,
	\begin{equation*}
		\sup_{e \in F} \mathbb{E}[W_e \mid D] \le C_N < \infty
	\end{equation*}
	and $\mathcal{E}^\Phi_D(\theta) < \infty$, then the conclusion holds.
\end{proposition}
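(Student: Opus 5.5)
The first assertion of Proposition~\ref{prop:block} is a direct specialization of Theorem~\ref{thm:random}, so the plan is to check its hypotheses in the block setting and then deduce the particular case from a uniform bound. I would condition throughout on the realization of the coarse demand network $D=(T,F)$, which is possible because both the hypothesis and the flow $\theta$ are stated conditionally on $D$. I would also fix one selected open repair route $\gamma_e$ for each $e\in F$, so that $W_e$ is the realized cost of that route. Here the cost is the variable cost $W_e=\sum_{a\in\gamma_e}\beta_\Phi(m_a)$ of Theorem~\ref{thm:weighted}, or its weighted analogue.

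\textbf{Step 1 (almost-sure finiteness).} Set $Z=\sum_{e\in F}W_e\,\Phi(|\theta(e)|)$. Since every summand is nonnegative, Tonelli's theorem gives
\[
\mathbb{E}[Z\mid D]=\sum_{e\in F}\mathbb{E}[W_e\mid D]\,\Phi(|\theta(e)|)<\infty .
\]
Hence $Z<\infty$ almost surely. This is exactly the first half of Theorem~\ref{thm:random}.

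\textbf{Step 2 (lifting).} On the almost-sure event $\{Z<\infty\}$, and conditionally on $D$ and the selected routes, the routes $\{\gamma_e\}_{e\in F}$ form a deterministic variable-cost repair scheme inside the open physical network. Theorem~\ref{thm:weighted} then gives a lifted unit flow $\Theta=\sum_{e\in F}\theta(e)\eta_e$ with $\mathcal{E}^\Phi_H(\Theta)\le Z<\infty$. The divergence computation in the proof of Theorem~\ref{thm:main} shows that $\Theta$ is a unit flow from $\iota(o)$ on the open cluster of $\iota(o)$. I expect the only delicate point in the whole argument to be here: one must check that $\Theta$ is well defined, i.e.\ each physical edge lies on only finitely many selected routes, so that $m_a<\infty$. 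I would take this to be part of the notion of ``selected open repair route'' with finite cost $W_e$: if $m_a=\infty$ for some $a\in\gamma_e$, then $\beta_\Phi(m_a)$ is undefined or infinite, which is incompatible with the finiteness of $Z$. This is the same convention under which Theorem~\ref{thm:random} is stated.

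\textbf{Step 3 (particular case).} If $\sup_{e\in F}\mathbb{E}[W_e\mid D]\le C_N$, then
\[
\sum_{e\in F}\mathbb{E}[W_e\mid D]\,\Phi(|\theta(e)|)\le C_N\,\mathcal{E}^\Phi_D(\theta)<\infty .
\]
This is the deterministic-bound clause of Theorem~\ref{thm:random} with $C_e\equiv C_N$, so Steps~1--2 apply and the conclusion holds.
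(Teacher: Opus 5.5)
Your proposal is correct and follows the paper's proof. The paper simply cites Theorem~\ref{thm:random} for the first assertion and, for the particular case, bounds $\sum_{e\in F}\mathbb{E}[W_e\mid D]\,\Phi(|\theta(e)|)\le C_N\,\mathcal{E}^\Phi_D(\theta)<\infty$. Your Steps~1--2 just unpack the proof of Theorem~\ref{thm:random}: Tonelli, then Theorem~\ref{thm:weighted} applied with the realized costs.

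Your point that finite congestion $m_a<\infty$ must be built into the notion of selected routes is one the paper leaves implicit. However, appealing to finiteness of $Z$ is a weak justification, since $Z<\infty$ says nothing about routes carrying zero flow. In a block construction the natural justification is that routes stay inside enlarged boxes, and each physical edge lies in only boundedly many of them.
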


\begin{proof}
	The first conclusion is Theorem~\ref{thm:random}. Under the uniform fixed-scale bound, for
	every $e \in F$ we have $\mathbb{E}[W_e \mid D] \le C_N$, hence
	\begin{equation*}
		\sum_{e \in F} \mathbb{E}[W_e \mid D]\, \Phi(|\theta(e)|)
		\le C_N \sum_{e \in F} \Phi(|\theta(e)|)
		= C_N\, \mathcal{E}^\Phi_D(\theta) < \infty,
	\end{equation*}
	so the hypothesis of Theorem~\ref{thm:random} is satisfied.
\end{proof}

This proposition separates the role of block percolation from the role of energy comparison.
Renormalization and chemical-distance estimates produce good coarse edges and control repair
costs; the repair theorem then lifts the macroscopic flow.

\subsection{Finite-scale effective resistance and capacity comparison}

The quadratic case also gives a finite-scale comparison. This is useful for potential-theoretic
applications because transience is an asymptotic statement, while effective resistance and
capacity can be compared at finite boundaries.

Let $A \subseteq T$ be a finite terminal boundary. Suppose that a finite demand subnetwork
connecting $o$ to $A$ is repaired in $H$ by routes of length at most $L$ and congestion at
most $K$. Let $R^{\mathrm{eff}}$ denote efficient resistance in the graph. Then
\begin{equation}\label{eq:resistance}
	R^{\mathrm{eff}}_H(\iota(o) \leftrightarrow \iota(A))
	\le LK\, R^{\mathrm{eff}}_{\mathcal{B}}(o \leftrightarrow A).
\end{equation}
Indeed, take an arbitrary unit flow $\theta$ from $o$ to $A$ in the demand network.
Corollary~\ref{cor:transience} gives a lifted unit flow $\Theta$ from $\iota(o)$ to
$\iota(A)$ satisfying
\begin{equation*}
	\mathcal{E}_H(\Theta) \le LK\, \mathcal{E}_{\mathcal{B}}(\theta).
\end{equation*}
By Thomson's principle, $R^{\mathrm{eff}}_H(\iota(o) \leftrightarrow \iota(A))$ is the
infimum of $\mathcal{E}_H$ over unit flows from $\iota(o)$ to $\iota(A)$, so
\begin{equation*}
	R^{\mathrm{eff}}_H(\iota(o) \leftrightarrow \iota(A))
	\le \mathcal{E}_H(\Theta) \le LK\, \mathcal{E}_{\mathcal{B}}(\theta)
\end{equation*}
for every demand unit flow $\theta$; taking the infimum over $\theta$ yields
\eqref{eq:resistance}. Equivalently, in terms of capacity,
\begin{equation*}
	\mathrm{Cap}_H(\iota(o), \iota(A)) \ge \frac{1}{LK}\, \mathrm{Cap}_{\mathcal{B}}(o, A).
\end{equation*}
Thus the repair framework gives resistance-profile bounds and not only qualitative transience.

\bibliographystyle{model2-names}
\bibliography{myreferences}

\end{document}